\def\0{{\bf 0}}
\def\R{{\mathbb R}}
\def\Z{{\mathbb Z}}
\theoremstyle{plain}
\newtheorem{thm}{Theorem}
\newtheorem{cor}[thm]{Corollary}
\newtheorem{lem}[thm]{Lemma}
\theoremstyle{definition}
\newtheorem*{definition}{Definition}
\newtheorem*{theorem*}{Theorem}
\newtheorem{remark}[thm]{Remark}
\newtheorem*{acknowledgement}{Acknowledgements}
\newcommand{\UWaveDontUseDirectlyOnlyThroughAdd}[2][blue]{\bgroup \markoverwith{\textcolor{#1}{\lower3.5\p@\hbox{\sixly \char58}}}\ULon{#2}}
\newcommand{\SOut}[2][red]{\bgroup\markoverwith {\textcolor{#1}{\rule[.45ex]{2pt}{.1ex}}}\ULon{#2}}
\newcommand{\highlight}[2][yellow]{\bgroup\markoverwith {\textcolor{#1}{\rule[-.2em]{2pt}{1.2em}}}\ULon{#2}}
\newcommand{\Comment}[1]{{\color{blue}#1}}
\newcommand{\ToDo}[1]{\textbf{\Comment{[#1]}}}
\newcommand{\iny}{\ensuremath{\infty}}
\newcommand{\grad}{\ensuremath{\nabla}}
\DeclareMathOperator{\dv}{div} %
\newcommand{\prt}{\ensuremath{\partial}}
\newcommand{\brac}[1]{\ensuremath{\left[ #1 \right]}}
\DeclarePairedDelimiter{\multibrac}{\Big[}{\Big]}
\newcommand{\pr}[1]{\ensuremath{\left( #1 \right) }}
\newcommand{\norm}[1]{\ensuremath{\left\Vert #1 \right\Vert}}
\newcommand{\smallnorm}[1]{\ensuremath{\Vert #1 \Vert}}
\newcommand{\abs}[1]{\left\vert#1\right\vert}
\newcommand{\largeabs}[1]{\bigg\vert#1\bigg\vert}
\DeclarePairedDelimiter{\set}{\{}{\}}
\newcommand{\largeset}[1]{\left\{#1\right\}}
\newcommand{\Ignore}[1]{}
\newcommand{\ScratchWork}[1]
	{\begin{quote}\Comment{\footnotesize
        \medskip

        \noindent#1}
        \end{quote}
	}
\newcommand{\ScratchWork}[1]{}
\begin{document}

\raggedbottom

\numberwithin{equation}{section}

%
%
\newcommand{\MarginNote}[1]{
    \marginpar{
        \begin{flushleft}
            \footnotesize #1
        \end{flushleft}
        }
    }
%
%
\newcommand{\NoteToSelf}[1]{
    }

%
%
\newcommand{\Obsolete}[1]{
    }

\newcommand{\sg}
    {\ensuremath{\sigma}}    
    
    \newcommand{\be}
    {\ensuremath{\beta}}

   \newcommand{\pib}
    {\ensuremath{g_{\beta}}} 
    
     \newcommand{\sib}
    {\ensuremath{\psi_{\beta}}} 
    
        \newcommand{\besw}
    {\ensuremath{ {\dot{B}}^0_{2,2}(w) }} 
    
           \newcommand{\beswhalf}
    {\ensuremath{ {\dot{B}}^{1 \slash 2}_{2,2}(w) }} 
    
         \newcommand{\besdwhalf}
    {\ensuremath{ {\dot{B}}^{1 \slash 2}_{2,2}(Dw) }} 
       
           \newcommand{\besdw}
    {\ensuremath{ {\dot{B}}^0_{2,2}(Dw) }} 




\title[Euler Equation Effects at a Distance]{Incompressible Euler Equations and the Effect of Changes at a Distance}

\author{Elaine Cozzi}
\address{Department of Mathematics, Oregon State University}
\curraddr{}
\email{cozzie@math.oregonstate.edu}

\author{James P. Kelliher}
\address{Department of Mathematics, University of California, Riverside}
\curraddr{}
\email{kelliher@math.ucr.edu}

 \thanks{}

\subjclass[2010]{Primary 76B03} 
\date{\today} 


\keywords{Fluid mechanics, Euler equations}

\begin{abstract}
Because pressure is determined globally for the incompressible Euler equations, a localized change to the initial velocity will have an immediate effect throughout space. For solutions to be physically meaningful, one would expect such effects to decrease with distance from the localized change, giving the solutions a type of stability.
Indeed, this is the case for solutions having spatial decay, as can be easily shown. We consider the more difficult case of solutions lacking spatial decay, and show that such stability still holds, albeit in a somewhat weaker form.
\end{abstract}

\maketitle

\section{Introduction and Statement of the Main Theorem}
The Euler equations governing incompressible inviscid fluid flow in $\R^d$ are given by
\begin{align*}
    \begin{matrix}
        (E_d) & \left\{
            \begin{matrix}
                \partial_t u + u \cdot \nabla u = - \nabla p\\
                \text{div } u = 0 \\
                u|_{t = 0} = u^0.
            \end{matrix}
            \right.
    \end{matrix}
\end{align*}
There is extensive literature on existence and uniqueness of solutions to the Euler equations on $\R^d$ in various function spaces.  Short time existence of smooth solutions to ($E_d$) goes back to papers
of Lichtenstein and Gunther \cite{Lichtenstein,Gunther}; \cite{Kato} contains a modern form of these results.  For ($E_2$), surveys on global in time existence of solutions can be found in \cite{Majda} and \cite{Chemin1}.  A classical result for weak solutions of the Euler equations in the plane is due to Yudovich \cite{Yud1}, which gives existence and uniqueness of solutions with velocity in $L^2(\R^2)$ and vorticity
(curl of the velocity) in $L^p\cap L^{\infty}(\R^2)$ for some $p<\infty$.  (Yudovich worked in a bounded domain, but his argument is easily extended to the full plane.)  In recent years, some attention has also been paid to the Euler equations with initial vorticity which does not decay at infinity or may be unbounded (see, for example, \cite{AKLN}, \cite{Cozzi}, \cite{Serfati}, \cite{Taniuchi}, \cite{TTY}, \cite{Vishik}, \cite{Yud2}).  

\Ignore{ 
Here we focus our attention on velocity solutions to the Euler equations in two and three dimensions which are bounded and which have bounded vorticity. The goal of this paper is to establish a stability estimate which measures effects at later times to a far-away change in the initial velocity.  Specifically, we consider two velocity solutions $u_1$ and $u_2$ to ($E_d$) with initial data $u^0_1$ and $u^0_2$, respectively, where $u^0_1$ and $u^0_2$ differ only in a small neighborhood of some point $y_0$.  For this class of solutions, we establish an upper bound on the difference $|u_1 (t,z) - u_2 (t,z)|$ for $t>0$ and for $z$ far away from $y_0$.  As one would expect, our upper bound increases as $t$ increases and decreases as the distance from $z$ to $y_0$ increases.  Moreover, we can show that the upper bound can be improved when we assume decay of the velocity at infinity.  In particular, if we also assume $u_1$ and $u_2$ belong to $L^p(\R^d)$ for some $p<\infty$, then the upper bound on the difference of the two solutions decreases as $p$ decreases.
} 

We consider the question of what happens to the solution to the Euler Equations in $\R^d$ at time $t > 0$ when a change is made to the initial velocity far away from a given point. We focus, in particular, on solutions to the Euler equations in two or three dimensions which have bounded vorticity and bounded velocity, but for which vorticity has no spatial decay. 

For solutions having spatial decay of vorticity, bounds on the effects of far-away changes can be obtained in an elementary manner in the $L^2$-norm of the velocity. We quickly review the  argument, as it helps motivate what follows.

Write $B_R(x)$ for the ball of radius $R > 0$ centered at $x \in \R^d$. Let $\varphi \in C_C^\iny(\R^d)$ be supported in $B_2(0)$ with $\varphi \equiv 1$ on $B_1(0)$, and define $\varphi_R(x) = \varphi(x/R)$. Let $(u_1, p_1)$, $(u_2, p_2)$ be two solutions to the Euler equations,
and let $w = u_1 - u_2$, $p = p_1 - p_2$. Then subtracting $(E_d)$ for $u_2$ from $(E_d)$ for $u_1$, multiplying by $\varphi_R^2 w$, and integrating over time and space, one obtains
\begin{align}\label{phiwId}
	\begin{split}
	\norm{\varphi_R w(t)}_{L^2}^2
		= \norm{\varphi_R w^0}_{L^2}^2 + \int_0^t \int_{\R^d}
			\multibrac{2 (\varphi_R &\grad \varphi_R \cdot u_1) \abs{w}^2
				- 2 (\varphi_R w \cdot \grad u_2) \cdot (\varphi_R w) \\
			&\qquad
				+ 4 p \varphi_R \grad \varphi_R \cdot w}.
	\end{split}
\end{align}
\ScratchWork{
	We have,
	\begin{align*}
		\int (\prt_t w + w \cdot \grad u_2 + u_1 \cdot \grad w + \grad p) \cdot (\varphi_R^2 w)
			= 0
	\end{align*}
	so
	\begin{align*}
		\frac{1}{2} \norm{\varphi_R w(t)}_{L^2}^2
			= I + II + III,
	\end{align*}
	where
	\begin{align*}
		I :&= -\int (w \cdot \grad u_2) \cdot (\varphi_R^2 w)
			= - \int (\varphi_R w \cdot \grad u_2) \cdot (\varphi_R w), \\
		II :&= - \int (u_1 \cdot \grad w) \cdot (\varphi_R^2 w)
			= - \int \varphi_R^2 u_1^i \prt_i w^j w^j
			= - \int \varphi_R^2 u_1 \cdot \grad \abs{w}^2 \\
			&= \int (\dv \varphi_R^2 u_1) \abs{w}^2
			= 2 \int (\varphi_R \grad \varphi_R \cdot u_1) \abs{w}^2, \\
		III :&= - \int (\grad p) \cdot (\varphi_R^2 w)
			= \int p \dv(\varphi_R^2 w)
			= 2 \int p \varphi_R \grad \varphi_R \cdot w.
	\end{align*}
} 

For simplicity, suppose $\grad u_2 \in L^\iny([0, t] \times \R^d)$. Then, assuming that our solutions have sufficient decay to ensure that
\begin{align*}
	A := \norm{\grad u_2}_{L^\iny([0, t] \times \R^d)}
		+ \sum_{j = 1}^2
			\brac{\norm{u_j}_{L^\iny([0, t] \times \R^d)}
			+ \norm{u_j}_{L^\iny(0, t; L^2)}
			+ \norm{p_j}_{L^\iny(0, t; L^2)}
			}
		< \iny,
\end{align*}
we have
\begin{align*}
	\norm{\varphi_R w(t)}_{L^2}^2
		\le \norm{\varphi_R w^0}_{L^2}^2
			+ \int_0^t \brac{\frac{C}{R} A^3
				+ 2A \norm{\varphi_R w(s) }_{L^2}^2
				+ \frac{4A^2}{R}} \, ds.
\end{align*}
Gronwall's lemma then gives the bound,
\begin{align}\label{DecayingBound}
	\norm{\varphi_R w(t)}_{L^2}^2
		\le \brac{\norm{\varphi_R w^0}_{L^2}^2 + \frac{C \max \set{1, A}^3}{R} t}
			e^{2 A t}. 
\end{align}

The bound in (\ref{DecayingBound}) controls how rapidly the values of the initial velocity outside of $B_{2R}(0)$ can affect the value of the velocity inside $B_R(0)$, showing that, for decaying solutions, the Euler equations have a degree of locality.
(Of course, many variations on this theme are possible.)

If we only have bounded, but decaying vorticity, say vorticity lying in $L^\iny([0, t]; L^1 \cap L^\iny)$, then we cannot conclude that $\nabla u_2$ is bounded. Regardless, an argument like that in \cite{Yud2}, employing the Calderon-Zygmund inequality, can be used to handle the integral involving $\grad u_2$ in (\ref{phiwId}). Osgood's lemma is used in place of Gronwall's lemma, leading to a different bound, but we suppress these details in this brief overview.

Mere membership of $u_1$, $u_2$ in $L^\iny([0, t]; L^2)$ gives that $\norm{\varphi_R w(t)}_{L^2} \to 0$ as $R \to \iny$; it does not, however, give a rate of convergence. The rate of convergence in  (\ref{DecayingBound}) comes from bounding the rate of decay of the ``tail'' of the velocity, and through it the pressure.

If we now assume that the velocity and vorticity are bounded but do not decay at infinity, the type of argument that led to (\ref{DecayingBound}) fails completely. The lack of decay introduces several difficulties when estimating terms in (\ref{phiwId}).  First, while it is known that in 2D, $\grad p \in L^\iny([0, t] \times \R^2)$ and $\abs{p(t, x)} \le C(t) \log (\abs{x} + e)$ (\cite{Serfati,Kell}), this information is insufficient to control the pressure term in (\ref{phiwId}). Second, the integral involving $u_1$ in (\ref{phiwId}) can no longer be controlled since $w \notin L^2(\R^d)$. Third, the integral involving $\grad u_2$ cannot be controlled using the approach of Yudovich in \cite{Yud2}, as the vorticity only lies in $L^\iny(\R^d)$, and the Calderon-Zygmund theory does not apply in this setting. Finally, we would expect $\norm{\varphi_R w(t)}_{L^2}^2$ to grow with $R$ anyway; hence, it would appear that such an estimate is unattainable when the solutions do not decay. We should seek instead a weighted $L^\iny$-based estimate.

One can view the bound in (\ref{DecayingBound}) as a type of stability estimate that demonstrates a partial locality of the solutions. Without some such locality, the very physical meaning of the solutions must be called into question: If a small change to the initial velocity at an arbitrarily large distance from a given point can immediately and significantly change the value of the solution at that point, then one could never effectively compute a solution.

The original motivation for this paper was to address this issue of physical meaning for the case of bounded vorticity, bounded velocity (Serfati) solutions to the 2D Euler equations. Such solutions were first derived by Serfati in \cite{Serfati}, where their existence and uniqueness was obtained. Serfati's uniqueness argument easily extends to stability estimates in the $L^\iny$-norm of the velocity,
but that argument cannot be localized. Locality of Serfati solutions has been an unaddressed problem.

Stability estimates in $L^{\infty}$-based norms for Serfati solutions were established in \cite{AKLN, TTY}.
The estimate obtained in \cite{AKLN} is an outgrowth of Serfati's uniqueness proof in \cite{Serfati}, while the estimate in \cite{TTY} is an outgrowth of Vishik's uniqueness proof in \cite{Vishik}, and both \cite{Serfati} and \cite{Vishik} depend critically on $L^\iny$ estimates. (Neither employs an energy argument, nor is the maximum principle a key ingredient.) Our approach, which we describe in more detail below, will be to introduce a weighted $L^\iny$-norm into Vishik's argument, adapting as well some pressure-based estimates in \cite{TTY}.

The weight we use is defined as follows. For fixed $z \in \R^d$ and $\alpha > 0$, define $g_z:\R^d \to \R$ by
\begin{align*}
	g_z(x) = (1 + |x-z|)^{-\alpha}.
\end{align*}
A higher value of $\alpha$ makes the weighted norm more localized. As one might expect, the behavior of the Eulerian pressure limits how large $\alpha$ can be. In particular, the important properties of the Riesz operators described in Theorem \ref{Ward} below play a crucial role in determining this upper limit.

For a velocity field, $u$, on $\R^d$, we define the vorticity, $\omega = \omega(u)$, as
\begin{align*}
	\omega = \grad u - (\grad u)^T.
\end{align*}
(In 2D, one can alternately use $\omega = \prt_1 u^2 - \prt_2 u^1$.)

We now state our main result.     
\begin{thm}\label{main}
For fixed $T>0$, assume that ($u_1, p_1$) and ($u_2, p_2$) satisfy ($E_d$) in $\mathcal{D}^{'}((0,T)\times \R^d)$, with
\begin{equation*}
\begin{split}
	&u_1, u_2 \in L^{\infty}([0,T]; L^p\cap L^{\infty}(\R^d)) \text{ for }p\in[1,\infty], \\
	&\omega_1, \omega_2 \in L^{\infty}([0,T] ; L^{\infty}(\R^d)), \\
	&p_1 = \sum_{k,l=1}^d R_k R_l u_1^k u_1^l, \quad
	p_2 =  \sum_{k,l=1}^d R_k R_l u_2^k u_2^l \text { in } BMO.
\end{split}
\end{equation*} 
Let
\begin{equation*}
	M
		= M(t)
		:=
		\sup_{0\leq s \leq t}\left(1+ \| \omega_1(s)\|_{L^{\infty}}
			+ \| \omega_2(s)\|_{L^{\infty}}
			+ \| u_1(s)\|_{L^{\infty}\cap L^{p}}
			+ \| u_2(s)\|_{L^{\infty}\cap L^{p}} \right),
\end{equation*}
and set $\epsilon_0 = \smallnorm{g_z(u_1^0 - u_2^0)}_{L^\iny}$.  Define the strictly increasing function, $\mu \colon [0, 1] \mapsto [0, 1]$ by $\mu(x) = x (1 - \log x)$. Then, whenever $\alpha<1+d\slash p$,
\begin{align*}
	\norm{g_z (u_1 - u_2)(t)}_{L^\iny}
		\le M
			\mu \pr{
				\min \largeset{
					C \frac{\epsilon_0}{M} + C M t \mu \pr{\pr{\frac{\epsilon_0}{M}}^{e^{-CMt}}},
					1}}.
\end{align*}
The constant $C$ depends only upon $d$ and $\alpha$.
\end{thm}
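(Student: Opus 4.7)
The plan is to avoid any energy method altogether and work instead with a Serfati-type integral identity for the difference $w := u_1 - u_2$, $q := p_1 - p_2$, inspired by Vishik's uniqueness argument and the pressure bookkeeping of \cite{TTY}. The difference solves the transport-type equation $\partial_t w + u_1 \cdot \nabla w + w \cdot \nabla u_2 = -\nabla q$ with the explicit pressure formula $q = \sum_{k,l} R_k R_l(w^k u_1^l + u_2^k w^l)$ in $BMO$, and the difference of vorticities $\eta := \omega_1 - \omega_2$ obeys a transport equation with a stretching-type forcing by $w$. Since $\nabla u_j$ is only log-Lipschitz and $w$ lacks decay, I would not differentiate anything; instead, fix a smooth cutoff $a$ with $a \equiv 1$ near $0$ and compactly supported, set $a_\lambda(x) := a(x/\lambda)$, and split the Biot-Savart kernel $K$ into a local piece $a_\lambda K$ and a nonlocal piece $(1-a_\lambda) K$. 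Integrating by parts against the transport structure produces a Serfati identity of the schematic form
\begin{equation*}
w(t,x) = w(0,x) + \bigl[(a_\lambda K)\ast\eta\bigr](t,x) - \bigl[(a_\lambda K)\ast\eta\bigr](0,x) + \int_0^t \mathcal{N}_\lambda[w,u_1,u_2,q](s,x)\,ds,
\end{equation*}
where $\mathcal{N}_\lambda$ gathers convolutions of $\nabla\nabla^\perp\bigl[(1-a_\lambda)K\bigr]$ against the tensor $u_j\otimes w$ and a direct contribution from $\nabla q$.

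The heart of the argument is to bound each piece of this identity in the weighted norm $\|g_z(\cdot)\|_{L^\infty}$. For the local piece, $a_\lambda K$ is supported in $B_{2\lambda}$, so it picks up a factor of order $M\lambda$ (in 2D, $\lambda \log(1/\lambda)$ type); for the nonlocal piece, $\nabla\nabla^\perp\bigl[(1-a_\lambda)K\bigr]$ decays like $|y|^{-(d+1)}$ uniformly for $|y|\ge \lambda$, so its convolution against a tensor whose weighted $L^\infty$ norm is $\epsilon := \|g_z w\|_{L^\infty}$ and whose $L^\infty\cap L^p$ norm is bounded by $M$ is controlled by interpolating between the weighted bound and the $L^p\cap L^\infty$ bound. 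Here is exactly where the hypothesis $\alpha<1+d/p$ enters: the integrability of $g_z(x)^{-1}|\nabla\nabla^\perp((1-a_\lambda)K)(x-y)|\,g_z(y)^{-1}$ against a function in $L^p\cap L^\infty$ requires $\alpha$ to lie below this threshold. The pressure contribution is handled in the same spirit by writing $q$ through the double Riesz representation and invoking Theorem \ref{Ward} to transfer the weighted $L^\infty$ control from $u_j\otimes w$ to $q$ in the same weighted class.

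Optimizing $\lambda$ and using the log-Lipschitz regularity of the flows of $u_1$ and $u_2$ to handle the composition with characteristics (which produces the modulus $\mu(x)=x(1-\log x)$), I would close an Osgood-type integral inequality of the form
\begin{equation*}
\epsilon(t) \le C\epsilon_0 + C\int_0^t M\,\mu\!\left(\frac{\epsilon(s)}{M}\right)\,ds,\qquad \epsilon(s):=\min\{\|g_z w(s)\|_{L^\infty},M\}.
\end{equation*}
Osgood's lemma applied to $\mu$ (whose reciprocal integrates to $\infty$ at $0$) then yields $\epsilon(t)\le M\,\mu\bigl(\bigl(\epsilon_0/M\bigr)^{e^{-CMt}}\bigr)$. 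The second, outer $\mu$ in the theorem's conclusion arises by reinserting this bound into the modulus of continuity estimate one more time (i.e., from the fact that the log-Lipschitz estimate on $u_2$ contributes an additional $\mu$ when one converts the preceding weighted bound back into a velocity difference estimate at a fixed point), producing the nested structure $\mu\bigl(C\epsilon_0/M+CMt\,\mu((\epsilon_0/M)^{e^{-CMt}})\bigr)$.

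The main obstacle I expect is the pressure estimate in the weighted $L^\infty$ setting. The Riesz transforms are not bounded on $L^\infty$, so some version of Theorem \ref{Ward} must give precise quantitative control on $R_kR_l$ acting on tensors of the type $w^k u_1^l + u_2^k w^l$, with an explicit dependence on the weighted $L^\infty$ size of the factor $w$ and on its $L^p\cap L^\infty$ size, in such a way that the threshold $\alpha<1+d/p$ emerges naturally. A secondary difficulty is the careful tracking of the cutoff scale $\lambda$ and of the modulus $\mu$ through the two stages of the argument so that the Osgood inversion actually produces the nested form stated in the theorem, rather than a weaker iterated-logarithm bound.
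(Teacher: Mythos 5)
Your proposal takes a genuinely different route from the paper --- a Serfati-type integral identity with a kernel splitting at scale $\lambda$, rather than the paper's Littlewood--Paley localization of Vishik's argument --- but it has a gap at its core that I do not see how to repair. The local term $\bigl[(a_\lambda K)\ast\eta\bigr](t,x)$ with $\eta=\omega_1-\omega_2$ cannot be bounded in terms of the quantity you are propagating: there is no control on $\|g_z\eta(t)\|_{L^\infty}$ (the vorticity difference is $O(M)$ everywhere, even when the velocity difference is small in the weighted norm), so the only available estimate is the crude $\|a_\lambda K\|_{L^1}\|\eta\|_{L^\infty}\le CM\lambda$ (up to a logarithm). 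Meanwhile the nonlocal kernel $\nabla\nabla^\perp\bigl[(1-a_\lambda)K\bigr]$ has $L^1$ mass of order $\lambda^{-1}$ over $\{|y|\ge\lambda\}$, not $\log(1/\lambda)$, so optimizing $M\lambda$ against $tM\epsilon\lambda^{-1}$ yields a square-root modulus of order $M\sqrt{t\epsilon}$ rather than the Osgood-summable $\mu(x)=x(1-\log x)$, and the resulting inequality does not produce a bound that vanishes with $\epsilon_0$. The standard repair --- representing $\eta(t)$ through the flow maps and comparing $X_1$ with $X_2$, which is where the modulus $\mu$ and the exponent $e^{-CMt}$ actually originate in \cite{Serfati,AKLN} --- is exactly the step the paper identifies as resistant to localization, and your sketch does not explain how the weight $g_z$ would pass through the composition with two different characteristics.

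For contrast, the paper's proof applies $\Delta_j$ to the equation for $w$, multiplies by $g_z$, and uses the elementary inequality $g_z(x)h_z(x-y)\le(1+|y|)^{\alpha}$ together with $\nabla g_z=\psi_z g_z$ to commute the weight past every convolution kernel (Lemma \ref{weight}); the factor $p_0$ (the number of retained frequency blocks) plays the role of your $\lambda^{-1}$ but is logarithmic, which is what makes Osgood's lemma applicable. Moreover, the two mechanisms you invoke for the fine structure of the conclusion are not the ones at work: the threshold $\alpha<1+d/p$ arises only in the lowest-frequency pressure block, via H\"older between $R_iR_k\nabla\chi\cdot(1+|\cdot|)^{\alpha}\in L^q$ (using Theorem \ref{Ward}) and $u_1\in L^p$, and the outer $\mu$ comes not from a second log-Lipschitz flow estimate but from converting $\sup_q\|g_z\Delta_q w\|_{L^\infty}$ back to $\|g_z w\|_{L^\infty}$ by minimizing $Na+M2^{-N}$ over the truncation level $N$.
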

\begin{remark}
	We can see from Theorem \ref{main} that for any $t \le T$,
	we obtain a useful bound on
	$\norm{g_z (u_1 - u_2)(t)}_{L^\iny}$ as long
	$\| g_z (u_1^0 - u_2^0)\|_{L^{\infty}}$
	is sufficiently small.
	Also, stability in $C^r(\R^d)$ for all $r < 1$ follows from Theorem \ref{main}
	by interpolation.
	\ScratchWork{
		We first need to establish that $u_j \in \dot{C}^\beta(\R^2)$ for all $\beta < 1$.
		For decaying vorticity this follows from the Biot-Savart law combined with
		the Calderon-Zygmund inequality and Sobolev embedding. (Or some of your
		paradifferential magic would do the trick, I imagine.) This result should
		be localizable for non-decaying vorticity using a cutoff function, so I imagine
		it is true. I shall assume that for now.  \ToDo{Yep, you can show that $u_j \in \dot{C}^\beta(\R^2)$ for all $\beta < 1$ in the nondecaying case using LP theory.  Its definitely true.}
		
		Now let $r \in (0, 1)$ and fix any $\beta \in (r, 1)$. Then
		\begin{align*}
			\norm{g_z(u_1 - u_2)}_{C^r}
				= \norm{g_z(u_1 - u_2)}_{L^\iny} + \norm{g_z(u_1 - u_2)}_{\dot{C}^r},
		\end{align*}
		and
		\begin{align*}
			\norm{f}_{\dot{C}^r}
				&= \sup_{x \ne y} \frac{\abs{f(x) - f(y)}}{\abs{x - y}^r}
				= \sup_{x \ne y} \abs{f(x) - f(y)}^{1 - \frac{r}{\beta}}
					\pr{\frac{\abs{f(x) - f(y)}}{\abs{x - y}^\beta}}^{\frac{r}{\beta}} \\
				&\le \norm{f}_{L^\iny}^{1 - \frac{r}{\beta}}
					\norm{f}_{\dot{C}^\beta}^{\frac{r}{\beta}}.
		\end{align*}
		Also,
		\begin{align*}
			\norm{g_z(u_1 - u_2)}_{\dot{C}^\beta}
				\le \norm{g_z}_{C^\beta} \norm{u_1 - u_2}_{C^\beta}
				\le C(M).
		\end{align*}
		The  constant here depends upon $M$, since the bound on the $\dot{C}^\beta$
		norms of $u_1$ and $u_2$ here depends upon $M$.
		
		Hence,
		\begin{align*}
			\norm{g_z(u_1 - u_2)}_{C^r}
				&\le \norm{g_z(u_1 - u_2)}_{L^\iny}
					+ \norm{g_z(u_1 - u_2)}_{L^\iny}^{1 - \frac{r}{\beta}}
					\norm{g_z(u_1 - u_2)}_{\dot{C}^\beta}^{\frac{r}{\beta}} \\
				&\le \norm{g_z(u_1 - u_2)}_{L^\iny}
					+ C(M)^{\frac{r}{\beta}}
					\norm{g_z(u_1 - u_2)}_{L^\iny}^{1 - \frac{r}{\beta}}.
		\end{align*}

	}
\end{remark}

We now briefly describe three examples to which Theorem \ref{main} applies. All of these examples are 2D, for even though Theorem \ref{main} is not dimension-dependent, only in 2D are weak solutions to the Euler equations known to exist.

If $p=\infty$ and $d=2$, then Theorem \ref{main} above addresses Serfati (bounded vorticity, bounded velocity) solutions to the Euler equations in the plane (see \cite{Serfati}), and supplies an alternate proof of uniqueness to those in \cite{Serfati,AKLN,TTY}. Moreover, it is shown in \cite{Gallay} that for Serfati solutions,
\begin{align*}
	\norm{u_j(t)}_{L^\iny}
		\le C \smallnorm{u_j^0}_{L^\iny} \pr{1 + \smallnorm{\omega_j^0}_{L^\iny} t}.
\end{align*}
Hence, $M(t) \le C (M(0) + M(0)^2 t)$ for Serfati solutions.

If $u^0$ belongs to $L^2(\R^2)$ and $\omega^0$ belongs to $L^{\infty}(\R^2)$, then there exists a global-in-time solution to ($E_2$) which conserves energy and for which vorticity remains bounded (see \cite{CozziVV1} for details).  For such solutions, one can apply a simple argument using Littlewood-Paley theory to show that for all $t > 0$, 
\begin{equation*}
\| u (t) \|_{L^{\infty}} \leq C(\| u(t) \|_{L^2} + \| \omega(t) \|_{L^{\infty}})  = C(\| u^0 \|_{L^2} + \| \omega^0 \|_{L^{\infty}}).
\end{equation*}
Thus, Theorem \ref{main} applies for this class of solutions with $p=2$ and $d=2$, and we have $M(t) \le C M(0)$.

If $u^0$ belongs to $L^2(\R^2)$ and $\omega^0$ belongs to $L^p\cap L^{\infty}(\R^2)$ for some $p<\infty$, then Theorem \ref{main} addresses the classical Yudovich solutions to ($E_2$) and we again have $M(t) \le C M(0)$.

Let us consider one simple application of Theorem \ref{main}, say, to be explicit, in the setting of Serfati solutions. Suppose that $u_1^0 = u^2_0$ on $B_R(0)$ for some $R > 0$ and that $u_1^0$ is compactly supported on $B_R(0)$. Then
\begin{align}\label{eps0Bound}
	\epsilon_0
		= \smallnorm{g_0(u_1^0 - u_2^0)}_{L^\iny}
		\le \smallnorm{u_2^0}_{L^\iny(B_R(0)^C} (1 + R)^{-\alpha}.
\end{align}

The bound that results from Theorem \ref{main} at time zero is
\begin{align*}
	\abs{(u^0_1 - u^0_2)(0)}
		\le \norm{g_0 (u^0_1 - u^0_2)}_{L^\iny}
		\le C M \mu(C \epsilon_0/M).
\end{align*}
Because $\mu$ is superlinear ($\lim_{x \to 0^+} \mu'(x) = +\iny$), the bound at time zero is somewhat higher than the initial difference. Nonetheless, because the bound on $\epsilon_0$ in (\ref{eps0Bound}) decreases inversely with $R$, the bound on $\norm{g_0(u_1 - u_2)(t)}_{L^\iny}$ decreases as $R \to \iny$ for any fixed $t > 0$. This allows us to determine the distance, $R$, away from a given point (the origin, in this case) beyond which the initial velocity will not affect the value of the velocity at the given point up to a desired time. 

This example illustrates that the change to the initial velocity in Theorem \ref{main} need not be localized, it need only be far away from the point or points of interest. We see this for spatially decaying solutions as well in (\ref{DecayingBound}), though there the control is stronger and can be more cleanly expressed.

\Ignore{ 
\ToDo{Maybe we should remove the corollary, as the estimate is so messy now. Instead, maybe just comment on some of its implications.}

Now, given $y_0\in\R^d$, assume $u_1^0$ and $u_2^0$ differ only in some neighborhood $B=B_r(y_0)$ of $y_0$.  Then for all $x\in B$,
\begin{equation*}
\begin{split}
& g_z(x) =  \left(\frac{1+ |y_0-z| }{ 1+|x-z|} \right)^{\alpha} g_z(y_0) \leq \left(\frac{1+ |y_0-x| + |x-z| }{ 1+|x-z|} \right)^{\alpha} g_z(y_0)\\
&\qquad  \leq (1+ |y_0-x| )^{\alpha} g_z(y_0)  \leq (1+ r )^{\alpha} g_z(y_0).
\end{split}
\end{equation*}
In this case, we can conclude that 
\begin{equation*}
\|g_z(u_1^0 -u_2^0) \|_{L^{\infty}(\R^d)} \leq (1+ r )^{\alpha} g_z(y_0)\| u_1^0 - u_2^0 \|_{L^{\infty}(B)}.
\end{equation*}
In addition, we have that
\begin{equation*}
|u_1(z) - u_2(z)| = | g_z(z) (u_1(z) - u_2(z))| \leq \| g_z (u_1 - u_2) (t) \|_{L^{\infty}}.
\end{equation*}

\Obsolete{
Now note that by continuity of $g_z$, for any fixed $y_0\in\R^d$ and any $C_1>1$, there exists a sufficiently small neighborhood $B$ of $y_0$ (whose size depends on $y_0$ and $C_1$) such that $\|g_z\|_{L^{\infty}(B)} \leq C_1 g_z(y_0)$.  We can conclude that if $u_1^0(x)$ and $u_2^0(x)$ differ only in $B$, then 
\begin{equation}\label{B}
\|g_z(u_1^0 -u_2^0) \|_{L^{\infty}} \leq C_1 g_z(y_0)\| u_1^0 - u_2^0 \|_{L^{\infty}}.
\end{equation}  
In addition, we have that
\begin{equation*}
|u_1(z) - u_2(z)| = | g_z(z) (u_1(z) - u_2(z))| \leq \| g_z (u_1 - u_2) (t) \|_{L^{\infty}}.
\end{equation*}     
Now assume that for fixed $y_0\in\R^d$, $u_1^0(y_0) \neq u_2^0(y_0)$.  We can use continuity of $g_z$, $u_1^0$ and $u_2^0$ (see, for example, Lemma A.3 of \cite{AKLN}) to conclude that for any $C_0>1$, there exists a sufficiently small neighborhood $B$ of $y_0$ such that 
\begin{equation}\label{B}
\| g_z(u_1^0 - u_2^0) \|_{L^{\infty}(B)} \leq C_0|g_z(y_0)(u_1^0(y_0) - u_2^0(y_0))|.
\end{equation}
In addition, we have that
\begin{equation*}
|u_1(z) - u_2(z)| = | g_z(z) (u_1(z) - u_2(z))| \leq \| g_z (u_1 - u_2) (t) \|_{L^{\infty}}.
\end{equation*}  
Thus we have the following corollary to Theorem \ref{main}.
\begin{cor}\label{maincor}
For fixed $T>0$, let $u_1$, $u_2$, and $\alpha$ be as in Theorem \ref{main}.  Given $y_0\in\R^d$, $C_1>1$, and $B$ satisfying (\ref{B}), assume that $u^0_1 = u^0_2$ on $B^c$.  For any point $z\in\R^d$, with $\|g_z(u_1^0 - u_2^0 )\|_{L^{\infty}}$ sufficiently small, 
\begin{equation*}
|(u_1 - u_2)(t, z)|  \leq M^2C(t+1)\left( \frac{|(u_1^0 - u_2^0 )(y_0)|}{(1+|y_0-z|)^{\alpha}} \right)^{e^{-CMt}}\left(3- \log_2  \left( \frac{|(u_1^0 - u_2^0 )(y_0)|}{(1+|y_0-z|)^{\alpha}}\right) \right)^2.
\end{equation*}  
\end{cor}    }   
Thus we have the following corollary to Theorem \ref{main}.
\ToDo{Change this corollary.}
\begin{cor}\label{maincor}
For fixed $T>0$, let $u_1$, $u_2$, $M$, and $\alpha$ be as in Theorem \ref{main}.  Given $y_0\in\R^d$ and $B=B_r(y_0)$ a neighborhood of $y_0$, assume that $u^0_1 = u^0_2$ on $B^c$.  There exists an absolute constant $C$ and a constant $C_1$ depending on $r$ such that, for any point $z\in\R^d$ with $(1+ r )^{\alpha} g_z(y_0)\| u_1^0 - u_2^0 \|_{L^{\infty}(B)}$ sufficiently small, 
\begin{equation*}
|(u_1 - u_2)(t, z)|  \leq M^2C_1(t+1)\left( \frac{\|u_1^0 - u_2^0 \|_{L^{\infty}(B)}}{(1+|y_0-z|)^{\alpha}} \right)^{e^{-CMt}}\left(3- \log_2  \left( \frac{\| u_1^0 - u_2^0 \|_{L^{\infty}(B)}}{(1+|y_0-z|)^{\alpha}}\right) \right)^2.
\end{equation*}  
\end{cor} 
The estimate in Corollary \ref{maincor} measures far away effects (at later times) of a local change in the initial velocity.
} 

Our proof of Theorem \ref{main} is a localization of the uniqueness proof of Vishik's in \cite{Vishik}, with the pressure, however, handled much as in \cite{TTY}.
We localize Vishik's argument using the function, $g_z$. Each of the terms that appear in \cite{Vishik} have an analog here, though most of these terms must be bounded in a different manner. At the heart of bounding these localized terms are the inequality,
\begin{align}\label{ghIntro}
	g_z(x) h_z(x - y)
		\le (1+|y|)^{\alpha},
\end{align}
established in the proof of Lemma \ref{weight}, and the identity,
\begin{align}\label{gradgz}
	\grad g_z = \psi_z g_z,
\end{align}
where $\psi_z$ is a vector-valued function with $L^\iny$-norm independent of $z \in \R^d$ (in fact, $\norm{\psi_z}_{L^\iny} = \alpha$).

The identity in (\ref{ghIntro}) is used several times, in each case to bound terms of the form $g_z(x) (\phi * f)(x)$, as follows:
\begin{align*}
	g_z(x) &(\phi * f)(x)
		= \int \phi(y) g_z(x - y) g_z(x)  h_z(x - y) f(x - y) \, dy \\
		&\le \int \abs{\phi(y)} (1 + \abs{y})^\alpha \abs{(g_z f) (x - y)} \, dy
		= (\abs{\phi(\cdot)}(1 + \abs{\cdot})^\alpha * \abs{g_z f})(x).
\end{align*}
When $\phi$ has sufficient decay, this effectively brings $g_z$ into the integral to obtain an estimate based on the function $g_z f$.

The identity in (\ref{gradgz}) is used to control the one term that is the analog of the integral involving $u_1$ in (\ref{phiwId}). As a consequence of (\ref{gradgz}), $\abs{\grad g_z} \le C \abs{g_z}$. Such a bound is never possible for a compactly supported function: this inability is the fundamental reason why we cannot use the ``cleaner'' localization that a compactly supported function would provide. This same issue shows up, for instance, in \cite{Gallay} (see Section 3.3), though it is addressed in a different manner.

In \cite{Vishik}, decay of vorticity is assumed, which allows the pressure to be easily dealt with. Without such decay, the pressure requires much more careful treatment. We follow the approach in \cite{TTY}, localizing it, much as we localize the approach in \cite{Vishik}.

\begin{remark}
Being a localization of the approaches in \cite{Vishik,TTY}, we expect that a result similar to Theorem \ref{main} will hold for vorticity in the spaces containing unbounded functions
($bmo(\R^d)$, for example)
considered in \cite{Vishik,TTY}. However, for simplicity, we do not consider unbounded vorticity here.  
\end{remark}

Finally, we mention that the proof of uniqueness of Serfati solutions in \cite{Serfati} does not rely upon paradifferential calculus, and hence is not so tightly tied to the full plane. Indeed, Serfati's approach was extended to a 2D exterior domain in \cite{AKLN}. The proof involves bounding the difference between the flow maps for the two solutions (in this, it has similarities to the proof in \cite{MarchioroPulvirenti} of uniqueness for Euler solutions in a 2D bounded domain). This approach seems resistant, however, to localization. It thus remains an interesting open problem whether an analog of Theorem \ref{main} can be obtained for bounded vorticity, bounded velocity solutions in a 2D exterior domain.

\Obsolete{\begin{remark}
Theorem \ref{main} and Corollary \ref{maincor} do not address solutions to the Navier-Stokes equations ($NS$).  However, an inviscid limit result from \cite{CozziVV} approximates solutions of ($NS$) uniformly by solutions to the Euler equations, and can be used with Theorem \ref{main} to establish an estimate for solutions to ($NS$).  Specifically, for $u_{\nu}$ a solution to ($NS$) with $u_{\nu}$ and $\omega_{\nu}$ in $L^{\infty}([0,T];L^{\infty}(\R^d))$, and $u$ as in Theorem \ref{main}, both with the same initial data $u^0$, we have, for sufficiently small $\nu$,  
\begin{equation*} 
\| u_{\nu} - u \|_{L^{\infty}([0,T]; L^{\infty}(\R^2))} \leq C_{1}(2- \log \nu ) \nu^{Ce^{-C_{1}}}. 
\end{equation*}
(The constant $C_1$ depends on $T$ and on the $L^{\infty}$-norms of the Navier-Stokes and Euler velocities and vorticities.  We refer the reader to \cite{CozziVV} for further details.)  This estimate, combined with Corollary \ref{maincor}, allow us to conclude that for two solutions $u_{\nu,1}$ and $u_{\nu,2}$ to ($NS$) with initial data $u^0_1$ and $u^0_2$, respectively, 
\begin{equation*}
\begin{split}
&| (u_{\nu,1} - u_{\nu,2})(t,z)| \leq C_{1}(2- \log \nu ) \nu^{Ce^{-C_{1}}} + \\
&\qquad M^2C(t+1)\left(  \frac{|(u_1^0 - u_2^0)(y_0)|}{(1+|y_0-z|)^{\alpha}} \right)^{e^{-CMt}}\left(3- \log_2  \left(  \frac{|(u_1^0 - u_2^0)(y_0)|}{(1+|y_0-z|)^{\alpha}}\right) \right)^2.
\end{split}
\end{equation*}  
\end{remark}    }   
\section{Definitions and Preliminary Lemmas}
We first define the Littlewood-Paley operators.  It is classical that there exists two functions ${\chi}, {\phi} \in S(\R^d)$ with supp $\hat{\chi}\subset \{\xi\in \R^d: |\xi |\leq \frac{5}{6} \}$ and supp $\hat{\phi}\subset \{\xi\in \R^d: \frac{3}{5} \leq|\xi |\leq \frac{5}{3} \}$, such that, if for every $j\geq 0$ we set $\phi_j(x) = 2^{jd} \phi(2^j x)$ then
\begin{equation*}
\begin{split}
	&\hat{\chi}+ \sum_{j\geq 0} \hat{\phi_j}
		= \hat{\chi} + \sum_{j\geq 0} \hat{\phi}(2^{-j} \cdot) 
		\equiv 1.
\end{split}
\end{equation*}

For $n\in\Z$ define ${\chi}_n \in S(\R^d)$ in terms of its Fourier transform ${\hat{\chi}}_n$, where ${\hat{\chi}}_n$ satisfies 
\begin{equation*}
{\hat{\chi}}_n (\xi) =   \hat{\chi}(\xi) + \sum_{j\leq n} \hat{\phi_j}(\xi)
\end{equation*}
for all $\xi\in\R^d$.  For $f\in S'(\R^d)$ define the operator $S_n$ by  
\begin{equation*}
S_n f = {{\chi}}_n \ast f.
\end{equation*}
Finally, for $f\in S'(\R^d)$ and $j\geq -1$, define the Littlewood-Paley operators ${\Delta}_j$ by
\begin{align*}
    \begin{matrix}
        &\Delta_j f  = \left\{
            \begin{matrix}
                 \chi\ast f,  \qquad j=-1\\
                \phi_j\ast f, \qquad j\geq 0.
            \end{matrix}
            \right.
    \end{matrix}
\end{align*}

We will also need the paraproduct decomposition introduced by J.-M. Bony in \cite{Bony}.  We recall the definition of the paraproduct and remainder used in this decomposition. 
\begin{definition}\label{para}
Define the paraproduct of two functions $f$ and $g$ by
\begin{equation*}
T_fg = \sum_{\stackrel{i,j}{i\leq j-2}} \Delta_i f\Delta_j g = \sum_{j=1}^{\infty} S_{j-2}f\Delta_j g.
\end{equation*}
We use $R(f,g)$ to denote the remainder.  $R(f,g)$ is given by the following bilinear operator:  
\begin{equation*}
R(f,g)=\sum_{\stackrel{i,j}{|i-j|\leq 1}} \Delta_if\Delta_jg.
\end{equation*}
\end{definition}
\noindent Bony's decomposition then gives
\begin{equation*}
fg=T_fg + T_gf + R(f,g).
\end{equation*} 

We will make use of Bernstein's Lemma in what follows.  A proof of the lemma can be found in \cite{Chemin}, chapter 2.
\begin{lem}\label{bernstein}
(Bernstein's Lemma) Let $r_1$ and $r_2$ satisfy $0<r_1<r_2<\infty$, and let $p$ and $q$ satisfy $1\leq p \leq q \leq \infty$. There exists a positive constant $C$ such that for every integer $k$ , if $u$ belongs to $L^p(\R^d)$, and supp $\hat{u}\subset B(0,r_1\lambda)$, then 
\begin{equation}\label{bern1}
\sup_{|\alpha|=k} ||\partial^{\alpha}u||_{L^q} \leq C^k{\lambda}^{k+d(\frac{1}{p}-\frac{1}{q})}||u||_{L^p}.
\end{equation}
Furthermore, if supp $\hat{u}\subset C(0, r_1\lambda, r_2\lambda)$, then 
\begin{equation}\label{bern2}
C^{-k}{\lambda}^k||u||_{L^p} \leq \sup_{|\alpha|=k}||\partial^{\alpha}u||_{L^p} \leq C^{k}{\lambda}^k||u||_{L^p}.
\end{equation} 
\end{lem}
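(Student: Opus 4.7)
The plan is to represent $u$ (and its derivatives) as convolutions with $L^1$-controlled kernels arising from a Fourier-side cutoff, then to apply Young's convolution inequality. Fix $\phi \in C_c^\infty(\R^d)$ with $\phi \equiv 1$ on $B(0, r_1)$ and supported in a slightly larger ball, and let $h := \check{\phi}$, a Schwartz function. The rescaling $h_\lambda(x) := \lambda^d h(\lambda x)$ satisfies $\|h_\lambda\|_{L^r} = \lambda^{d(1 - 1/r)}\|h\|_{L^r}$ for every $r \in [1,\infty]$, which will supply the correct $\lambda$-scaling throughout.

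For inequality (\ref{bern1}), the Fourier support condition gives $\hat u(\xi) = \phi(\xi/\lambda)\hat u(\xi)$, i.e.\ $u = h_\lambda \ast u$. Rather than passing the full multi-derivative onto the kernel (which would produce a $k$-dependent constant $\|\partial^\alpha h\|_{L^r}$), I would iterate a one-derivative estimate. From $\widehat{\partial_j u}(\xi) = \lambda \bigl[i(\xi/\lambda)_j \phi(\xi/\lambda)\bigr]\hat u(\xi)$, one obtains $\partial_j u = \lambda\,(h_j)_\lambda \ast u$, where $h_j$ is the inverse Fourier transform of the Schwartz function $\eta\mapsto i\eta_j\phi(\eta)$. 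Young's inequality with $r=1$ yields $\|\partial_j u\|_{L^p} \le C_0\lambda\|u\|_{L^p}$ with $C_0 := \max_j \|h_j\|_{L^1}$. Since $\widehat{\partial^\beta u}(\xi) = (i\xi)^\beta \hat u(\xi)$ inherits the Fourier support of $\hat u$, iterating $k$ times gives
\[
\sup_{|\alpha|=k}\|\partial^\alpha u\|_{L^p} \le C_0^k \lambda^k \|u\|_{L^p}.
\]
Applying Young's inequality separately to $\partial^\alpha u = h_\lambda \ast \partial^\alpha u$ with the exponent $r$ chosen so that $1 + 1/q = 1/p + 1/r$ produces the $L^p \to L^q$ embedding
\[
\|\partial^\alpha u\|_{L^q} \le \lambda^{d(1/p - 1/q)}\|h\|_{L^r}\|\partial^\alpha u\|_{L^p}.
\]
Composing the two estimates gives (\ref{bern1}), with a constant that absorbs $C_0$ and $\|h\|_{L^r}$.

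For (\ref{bern2}), the right-hand inequality is the $p = q$ case of what was just shown. For the left-hand inequality, the annular Fourier support permits division by $|\xi|$. Choose $\tilde\phi \in C_c^\infty(\R^d\setminus\{0\})$ equal to $1$ on $\{r_1 \le |\eta| \le r_2\}$; because $\tilde\phi$ vanishes near zero, each $G_j(\eta) := \eta_j\tilde\phi(\eta)/|\eta|^2$ is smooth with compact support, and $\tilde\phi(\eta) = \sum_{j=1}^d \eta_j G_j(\eta)$. If $\hat u$ is supported in $\{r_1\lambda \le |\xi| \le r_2\lambda\}$, multiplying by $\tilde\phi(\xi/\lambda)$ and using $\xi_j\hat u = -i\widehat{\partial_j u}$ gives
\[
\hat u(\xi) = \lambda^{-1}\sum_{j=1}^d (-i)\,G_j(\xi/\lambda)\,\widehat{\partial_j u}(\xi),
\]
so, with $K_j := \check{G}_j$ (Schwartz, hence $L^1$),
\[
u = \lambda^{-1}\sum_{j=1}^d (-i)\,(K_j)_\lambda \ast \partial_j u.
\]
Young's inequality with $r=1$ now produces $\|u\|_{L^p} \le C_1 \lambda^{-1}\sup_{|\alpha|=1}\|\partial^\alpha u\|_{L^p}$ where $C_1 := d\max_j \|K_j\|_{L^1}$, and iterating $k$ times (noting that $\widehat{\partial^\alpha u}$ remains in the same annulus) gives the left-hand inequality in (\ref{bern2}) with constant $C_1^k$.

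The only mild obstacle is keeping the derivative constant in the form $C^k$ rather than some $\alpha$-dependent $C_\alpha$; this is resolved by refusing to pass all $k$ derivatives onto the kernel at once and instead iterating the one-derivative Young estimate, whose kernel (either $h_j$ or $K_j$) is a fixed Schwartz function independent of the order of the derivative.
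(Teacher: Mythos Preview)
Your proof is correct and is essentially the standard argument; the paper itself does not supply a proof of this lemma but simply refers to \cite{Chemin}, Chapter 2, where the very argument you outline (Fourier-side cutoff, Young's inequality, iteration for the $C^k$ form, and division by $|\xi|^2$ via an annular cutoff for the lower bound) is carried out. One cosmetic remark: to make the constant in (\ref{bern1}) genuinely independent of $p$ and $q$ as well as $k$, note that $\|h\|_{L^r} \le \max(\|h\|_{L^1},\|h\|_{L^\infty})$ for all $r\in[1,\infty]$ by interpolation, so the $\|h\|_{L^r}$ factor can be absorbed into a single $C$.
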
 

We will also make use of Osgood's Lemma in the proof of Theorem \ref{main}.  We refer the reader to \cite{CL} for a proof of the lemma.
\begin{lem}
Let $\rho$ be a measurable positive function, let $\gamma$ be a locally integrable positive function, and let $\mu$ be a continuous increasing function.  Assume that for some $\beta>0$, the function $\rho$ satisfies
\begin{equation*}
\rho(t) \leq \beta + \int_{t_0}^t \gamma(s)\mu(\rho(s)) \, ds.
\end{equation*}
Then $-\phi(\rho(t)) + \phi(\beta) \leq \int_{t_0}^t \gamma(s) \, ds$, where $\phi(x) = \int_x^1 \frac{1}{\mu(r)} \, dr$.
\end{lem}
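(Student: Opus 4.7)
The plan is to argue by an $\epsilon$-regularization trick that converts the integral inequality into a pointwise differential inequality, then integrate against the antiderivative $\phi$.

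First, I would introduce, for each $\epsilon>0$, the perturbed function
\begin{equation*}
	\rho_\epsilon(t) := \beta + \epsilon + \int_{t_0}^t \gamma(s)\mu(\rho(s))\,ds,
\end{equation*}
which strictly dominates $\rho$ by the hypothesis, satisfies $\rho_\epsilon(t_0)=\beta+\epsilon>0$, and is absolutely continuous in $t$. At almost every $t$ we have $\rho_\epsilon'(t)=\gamma(t)\mu(\rho(t))$. Since $\mu$ is increasing and $\rho(t)<\rho_\epsilon(t)$, and since $\gamma\geq 0$, this gives the pointwise (a.e.) differential inequality
\begin{equation*}
	\rho_\epsilon'(t)\leq \gamma(t)\,\mu(\rho_\epsilon(t)).
\end{equation*}
The virtue of passing from $\rho$ to $\rho_\epsilon$ is that we may now divide by $\mu(\rho_\epsilon(t))$ without worrying about a vanishing denominator: $\rho_\epsilon(t)\geq \beta+\epsilon>0$ for all $t\geq t_0$, and $\mu$ is positive and increasing, so $\mu(\rho_\epsilon(t))\geq \mu(\beta+\epsilon)>0$.

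Next, I would recognize the resulting inequality as a derivative with respect to $\phi$. Since $\phi(x)=\int_x^1 \frac{dr}{\mu(r)}$, one has $\phi'(x)=-\frac{1}{\mu(x)}$, hence by the chain rule for absolutely continuous functions,
\begin{equation*}
	\frac{d}{dt}\bigl(-\phi(\rho_\epsilon(t))\bigr) = \frac{\rho_\epsilon'(t)}{\mu(\rho_\epsilon(t))}\leq \gamma(t) \quad\text{a.e.}
\end{equation*}
Integrating from $t_0$ to $t$ yields
\begin{equation*}
	-\phi(\rho_\epsilon(t)) + \phi(\beta+\epsilon)\leq \int_{t_0}^t \gamma(s)\,ds.
\end{equation*}
Finally, because $\phi$ is decreasing (as $\phi'<0$) and $\rho(t)\leq \rho_\epsilon(t)$, we have $-\phi(\rho(t))\leq -\phi(\rho_\epsilon(t))$, so the same upper bound holds with $\rho$ in place of $\rho_\epsilon$. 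Letting $\epsilon\downarrow 0$ and using continuity of $\phi$ at $\beta$ produces the claimed inequality
\begin{equation*}
	-\phi(\rho(t)) + \phi(\beta)\leq \int_{t_0}^t \gamma(s)\,ds.
\end{equation*}

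The main obstacle to watch is the justification of the a.e.\ chain rule for $-\phi(\rho_\epsilon(\cdot))$ when $\rho$ is only measurable: this is handled by the absolute continuity of $\rho_\epsilon$ (as a Lebesgue integral of a locally integrable function) together with the fact that $\phi$ is $C^1$ on the interval $[\beta+\epsilon,\rho_\epsilon(T)]$, where $\mu$ is bounded away from zero. A subtle secondary point is ensuring $\phi(\beta+\epsilon)\to\phi(\beta)$ as $\epsilon\to 0$; this is immediate from continuity of $\phi$ on $(0,1]$, and any issue at $\beta=0$ is avoided by the hypothesis $\beta>0$. Once these technicalities are resolved, the regularization device reduces the proof to a one-line computation with $\phi$.
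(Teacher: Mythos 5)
Your argument is correct and is essentially the standard proof of Osgood's Lemma found in the reference \cite{CL} that the paper cites in lieu of giving its own proof: pass to the absolutely continuous majorant, divide by $\mu$, recognize the left side as $\frac{d}{dt}\bigl(-\phi(\rho_\epsilon(t))\bigr)$, and integrate. The only remark worth making is that since the lemma assumes $\beta>0$, the $\epsilon$-regularization is not actually needed here (the unperturbed majorant $\beta+\int_{t_0}^t\gamma\,\mu(\rho)$ is already bounded below by $\beta$, so $\mu$ of it is bounded away from zero); the $\epsilon$ device becomes essential only in the $\beta=0$ version of the lemma, but including it is harmless.
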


In what follows, we set
\begin{align*}
	h_z(x) = 1/g_z(x) = (1 + |x-z|)^\alpha.
\end{align*}

The following lemma is a key technical ingredient in the proof of Theorem \ref{main}.
\begin{lem}\label{weight}
Assume $f$ belongs to $L^{\infty}(\R^d)$.  There exists a constant $C>0$, depending only on $\alpha$, such that for every $j\geq -1$,
\begin{equation}\label{weight1}
	\|g_z\Delta_j f \|_{L^{\infty}}
		\leq C\| g_z f \|_{L^{\infty}},
\end{equation}
and, for $1\leq k \leq d$,
\begin{equation}\label{weight2}
	\|g_z\partial_k\Delta_j f \|_{L^{\infty}}
		\leq C2^j\| g_z f \|_{L^{\infty}}.
\end{equation}
\end{lem}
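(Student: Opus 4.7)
The plan is to prove both bounds by writing $\Delta_j f$ as a convolution, shifting the weight $g_z$ inside the integral using the key inequality (\ref{ghIntro}), and then exploiting the Schwartz decay of $\phi$ and $\chi$.

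First, I would verify the pointwise inequality
\[
g_z(x)\,h_z(x-y) \le (1+|y|)^\alpha,\qquad x,y,z\in\R^d.
\]
This comes from the triangle inequality $|x-y-z|\le|x-z|+|y|$, which gives $1+|x-y-z|\le (1+|x-z|)(1+|y|)$; raising to the $\alpha$ power and rearranging yields the claim. Equivalently, $g_z(x) \le (1+|y|)^\alpha\,g_z(x-y)$.

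Next, for $j\ge 0$ write $\Delta_j f = \phi_j * f$ with $\phi_j(y)=2^{jd}\phi(2^j y)$. Multiply by $g_z(x)$ and insert a factor of $g_z(x-y)/g_z(x-y)$:
\begin{align*}
|g_z(x)(\Delta_j f)(x)|
&= \left| \int \phi_j(y)\,g_z(x)\,f(x-y)\,dy\right| \\
&\le \int |\phi_j(y)|\,g_z(x)\,h_z(x-y)\,|g_z(x-y)f(x-y)|\,dy \\
&\le \|g_z f\|_{L^\infty} \int |\phi_j(y)|(1+|y|)^\alpha\,dy.
\end{align*}
After the substitution $w=2^j y$, the last integral equals $\int |\phi(w)|(1+|w|/2^j)^\alpha\,dw \le \int |\phi(w)|(1+|w|)^\alpha\,dw$ for $j\ge 0$, which is finite because $\phi\in\mathcal{S}(\R^d)$. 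For $j=-1$, the same computation with $\chi$ in place of $\phi_j$ yields the bound without the scaling step. This proves (\ref{weight1}).

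For (\ref{weight2}), I would repeat the argument with $\partial_k \Delta_j f=(\partial_k \phi_j)*f$, noting that $\partial_k \phi_j(y)=2^{j(d+1)}(\partial_k\phi)(2^j y)$. The same change of variables now produces an extra factor of $2^j$, giving
\[
\|g_z\,\partial_k\Delta_j f\|_{L^\infty}\le 2^j\,\|g_z f\|_{L^\infty}\int |\partial_k\phi(w)|(1+|w|)^\alpha\,dw\le C 2^j\|g_z f\|_{L^\infty}.
\]
The $j=-1$ case is handled with $\partial_k\chi$ and absorbed into the constant since $2^{-1}$ is bounded below.

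There is no serious obstacle here: the argument is essentially a clean bookkeeping exercise, with the only nontrivial input being the subadditivity-style inequality $g_z(x)h_z(x-y)\le(1+|y|)^\alpha$, which is exactly the device emphasized in the introduction (eq.~(\ref{ghIntro})). The main thing to be careful about is that one does not need any restriction on $\alpha$ for the Schwartz integral $\int|\phi(w)|(1+|w|)^\alpha\,dw$ to converge, so the constant depends on $\alpha$ (via the integrals of $|\phi|(1+|\cdot|)^\alpha$ and $|\chi|(1+|\cdot|)^\alpha$ and their derivatives) but not on $z$ or $j$.
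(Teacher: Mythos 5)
Your proposal is correct and follows essentially the same route as the paper: establish the pointwise bound $g_z(x)h_z(x-y)\le(1+|y|)^\alpha$, insert $h_z(x-y)g_z(x-y)$ into the convolution defining $\Delta_j f$ (resp.\ $\partial_k\Delta_j f$), and rescale to reduce to the finite Schwartz integrals $\int|\phi|(1+|\cdot|)^\alpha$ and $\int|\partial_k\phi|(1+|\cdot|)^\alpha$, with the $j=-1$ case handled identically using $\chi$. The only cosmetic difference is that you justify the key inequality via the submultiplicative bound $1+|x-y-z|\le(1+|x-z|)(1+|y|)$ rather than the paper's direct quotient estimate, which is equivalent.
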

\begin{proof}
We first prove (\ref{weight1}).  Observe that for any $\alpha>0$,
\begin{equation}\label{gh}
\begin{split}
&g_z(x)h_z(x-y) = \left( \frac{1+|x-y-z|}{1+|x-z|} \right)^{\alpha} \leq \left( \frac{1+|x-z| + |y|}{1+|x-z|} \right)^{\alpha} \leq (1+|y|)^{\alpha} .
\end{split}
\end{equation}
Assume $j\geq 0$.  By (\ref{gh}),
\begin{equation}\label{keyesthighfreq}
\begin{split}
&g_z(x)\int_{\R^d} \phi_j(y) f(x-y) \, dy = g_z(x)\int_{\R^d} \phi_j(y)h_z(x-y)g_z(x-y)f(x-y) \, dy \\
& \leq C\| g_z f \|_{L^{\infty}} \int_{\R^d} |\phi_j(y)|g_z(x)h_z(x-y) \, dy \leq C\| g_z f \|_{L^{\infty}}\int_{\R^d} |\phi_j(y)| (1+ |y|)^{\alpha} \, dy \\
&\qquad \leq C\| g_z f \|_{L^{\infty}}\int_{\R^d} |\phi(y)| (1+ |2^{-j}y|)^{\alpha} \, dy \leq C\| g_z f \|_{L^{\infty}}. 
\end{split}
\end{equation}
An identical argument can be applied for $j=-1$ with $\phi$ replaced by $\chi$.  This proves (\ref{weight1}).

To establish (\ref{weight2}), note that for $j\geq 0$,
\begin{equation}
\begin{split}
&g_z(x)\partial_k\Delta_j f (x) = g_z(x) \int_{\R^d} \partial_k\phi_j (y) f(x-y) \, dy \\
&\qquad = 2^{j(d+1)} \int_{\R^d} (\partial_k \phi) (2^jy) g_z(x)h_z(x-y) g_z(x-y)f(x-y) \, dy \\
&\qquad \leq C 2^j\|g_z f \|_{L^{\infty}} \int_{\R^d} (\partial_k \phi) (y) ( 1 + |2^{-j}y|)^{\alpha} \, dy \leq C 2^j\|g_z f \|_{L^{\infty}}.
\end{split}
\end{equation}
Again, an identical argument can be applied for $j=-1$ with $\phi$ replaced by $\chi$.  This completes the proof of Lemma \ref{weight}.
\end{proof}
In the proof of Theorem \ref{main}, the main restriction on the decay of $g_z$, and hence on the value of $\alpha$, comes from the pressure term.  To estimate this term, we will need the following theorem from \cite{WCU}, which gives a pointwise bound on the Riesz operators applied to sufficiently decaying functions with moment conditions.
\begin{thm}\label{Ward}
Assume $f$ is a function on $\R^d$ which satisfies:\\
\indent 1) $|f(x)| \leq C(1+|x|)^{-d-N+\epsilon}$,\\
\indent 2) $|D^{\alpha}f(x)|\leq C(1+|x|)^{-d -N -1 +\epsilon}$ for all $|\alpha|=1$, and\\
\indent 3) $\int x^{\beta} f(x) \, dx = 0$ for all $|\beta|<N$\\ 
for some fixed $\epsilon \in [0,1)$ and some positive integer $N$.
Then for each $i$, $1\leq i\leq d$, 
\begin{equation*}
|R_i f(x)| \leq C(1+|x|)^{-d-N +\epsilon + \delta}
\end{equation*}
for every $\delta$ satisfying $0 < \delta < 1-\epsilon$.
\end{thm}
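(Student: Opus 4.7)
The plan is to estimate $R_i f(x)$ for large $|x|$ by splitting the defining convolution integral into three regions. Write $R_i f(x) = c_d\, \text{p.v.}\!\int K(x-y) f(y)\,dy$ where $K(u) = u_i/|u|^{d+1}$ is homogeneous of degree $-d$, and decompose as $I_1 + I_2 + I_3$, where $I_1$ is the near-diagonal region $|x-y| \leq |x|/2$, $I_2$ is the near-origin region $|y| \leq |x|/2$, and $I_3$ is the far region $|y| \geq 2|x|$. (For $|x|$ bounded, hypotheses (1) and (2) give a pointwise bound directly from standard estimates on Riesz operators applied to Schwartz-like functions, so attention focuses on large $|x|$.)

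The region $I_3$ is handled crudely: since $|x-y| \geq |y|/2$ there, $|K(x-y)| \lesssim |y|^{-d}$, and hypothesis (1) gives $|I_3| \lesssim \int_{|y|\geq 2|x|} |y|^{-d}(1+|y|)^{-d-N+\epsilon}\,dy \lesssim |x|^{-d-N+\epsilon}$. For $I_1$ exploit that the odd kernel $K$ integrates to zero on spheres around the origin, so $I_1 = \text{p.v.}\!\int_{|x-y|\leq|x|/2} K(x-y)[f(y) - f(x)]\,dy$. By the mean value theorem and hypothesis (2), $|f(y) - f(x)| \leq |y-x|\, \sup_{B(x,|x|/2)}|Df| \lesssim |y-x|(1+|x|)^{-d-N-1+\epsilon}$ (since $B(x,|x|/2)$ sits outside the ball of radius $|x|/2$). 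The remaining geometric integral $\int_{|x-y|\leq |x|/2} |x-y|^{1-d}\,dy \lesssim |x|$ then yields $|I_1| \lesssim |x|^{-d-N+\epsilon}$.

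The main work is on $I_2$. Taylor expand $K(x-y)$ in $y$ about the origin to order $N$:
\[
K(x-y) = \sum_{|\beta|<N}\frac{(-y)^{\beta}}{\beta!}(\partial^{\beta} K)(x) + R_N(x,y), \qquad |R_N(x,y)|\lesssim |y|^N |x|^{-d-N},
\]
valid since $|x - sy| \geq |x|/2$ for $|y|\leq |x|/2$ and $s\in[0,1]$. Substituting into $I_2$, the polynomial terms contribute $\sum_{|\beta|<N} \tfrac{(-1)^{|\beta|}}{\beta!}(\partial^{\beta} K)(x)\int_{|y|\leq |x|/2} y^{\beta} f(y)\,dy$. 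By the moment hypothesis (3), $\int_{\R^d} y^{\beta} f(y)\,dy = 0$, so I rewrite each truncated integral as $-\int_{|y|>|x|/2} y^{\beta} f(y)\,dy$, whose absolute value is bounded (via hypothesis (1)) by $C|x|^{|\beta|-N+\epsilon}$; combined with $|\partial^{\beta} K(x)| \lesssim |x|^{-d-|\beta|}$, each polynomial term contributes $|x|^{-d-N+\epsilon}$. The remainder integral is bounded by $|x|^{-d-N}\int_{|y|\leq |x|/2} |y|^N (1+|y|)^{-d-N+\epsilon}\,dy$, an integral that is $O(|x|^{\epsilon})$ when $\epsilon > 0$ but $O(\log |x|)$ when $\epsilon = 0$; in either case this is absorbed into $|x|^{\epsilon+\delta}$ for any $\delta \in (0, 1-\epsilon)$, yielding the stated bound $|R_i f(x)| \lesssim (1+|x|)^{-d-N+\epsilon+\delta}$.

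The main obstacle is the bookkeeping for $I_2$: verifying that the tail integrals $\int_{|y|>|x|/2}|y|^{|\beta|}(1+|y|)^{-d-N+\epsilon}\,dy$ converge absolutely (which requires $|\beta| < N-\epsilon$, automatic from $|\beta| \leq N-1$ and $\epsilon < 1$), and isolating the unique source of the $\delta$ loss, namely the logarithmic blowup in the Taylor remainder integral when $\epsilon = 0$. Everything else in the proof is clean polynomial bookkeeping against the homogeneity of $K$.
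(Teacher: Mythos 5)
The paper does not prove this statement; it is imported verbatim from the reference \cite{WCU} and used as a black box, so there is no internal proof to compare against. Judged on its own, your argument is the standard route to such decay estimates (cancellation of the odd kernel near $x$, Taylor expansion of the kernel against the vanishing moments near the origin, crude bounds far away), and the quantitative bookkeeping is right: the moments converge absolutely because $|\beta|\le N-1<N-\epsilon$, each polynomial term and each of $I_1$, $I_3$ contributes $O(|x|^{-d-N+\epsilon})$, and the only source of the $\delta$ loss is the logarithm in the Taylor-remainder integral when $\epsilon=0$.

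There is one concrete flaw to fix: your three regions do not cover $\R^d$. The set where $|x|/2<|y|<2|x|$ but $|x-y|>|x|/2$ (for instance $y=-x$) belongs to none of $I_1$, $I_2$, $I_3$, so as written the decomposition $R_if=I_1+I_2+I_3$ is false. The repair is immediate and subsumes your $I_3$: on the entire complement of $I_1\cup I_2$ one has $|x-y|>|x|/2$, hence $|K(x-y)|\lesssim |x|^{-d}$, and hypothesis (1) gives $\int_{|y|>|x|/2}|f(y)|\,dy\lesssim |x|^{-N+\epsilon}$ (convergent since $N>\epsilon$), so this whole region contributes $O(|x|^{-d-N+\epsilon})$. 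With that correction, and the routine verification you gesture at for bounded $|x|$ (splitting at $|x-y|=1$ and using hypothesis (2) for the singular part), the proof is complete.
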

We will apply Theorem \ref{Ward} to both $f(x)=\nabla\chi(x)$ and $f(x) = D^{\beta}\phi(x)$, where $\chi$ and $\phi$ are as above.  In particular, we will make use of the following corollary to Theorem \ref{Ward}, which is proved in \cite{Cozzi}.
\begin{lem}\label{decayestcor}
Assume $\eta$ belongs to $\mathcal{S}(\R^d)$.  Given $\epsilon>0$, there exists $C>0$ such that for each $i$, $j$ with $1\leq i$, $j\leq d$,
\begin{equation*}
|R_iR_j \nabla\eta(x)| \leq C(1+|x|)^{-d-1+\epsilon}.
\end{equation*}
Moreover, if $\psi$ belongs to $\mathcal{S}(\R^d)$ and has Fourier transform supported away from the origin, then given any $\gamma>0$, there exists $C>0$ such that for each $i$, $j$ with $1\leq i$, $j\leq d$,
\begin{equation*}
|R_iR_j \psi(x)| \leq C(1+|x|)^{-\gamma}.
\end{equation*}
\end{lem}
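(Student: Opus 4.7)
The plan is to establish each estimate by two successive applications of Theorem~\ref{Ward}, exploiting the facts that differentiating a Schwartz function produces extra vanishing moments, and that a Riesz transform inherits a vanishing at the origin from the homogeneity of its Fourier multiplier.

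For the first estimate, I fix a single component $f := \partial_l\eta \in \mathcal{S}(\R^d)$ and apply Theorem~\ref{Ward} with $N = 1$. Hypotheses (1) and (2) are automatic from Schwartz decay, and hypothesis (3) reduces to $\int \partial_l \eta \, dx = 0$, which holds by integration by parts. This yields
\begin{equation*}
    |R_j \partial_l \eta(x)| \leq C(1+|x|)^{-d-1+\epsilon_1}
\end{equation*}
for any $\epsilon_1 \in (0,1)$.

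Next I set $g := R_j \partial_l \eta$ and apply Theorem~\ref{Ward} again with $N = 1$. Hypothesis (1) for $g$ is the bound just obtained. Hypothesis (3), $\int g \, dx = 0$, follows from $\widehat{R_j\partial_l\eta}(\xi) = (\xi_j\xi_l/|\xi|)\hat\eta(\xi)$ vanishing at the origin. The delicate point is hypothesis (2): I need $|\partial_k g(x)| \leq C(1+|x|)^{-d-2+\epsilon}$. Since Riesz transforms commute with derivatives, $\partial_k g = R_j\partial_l\partial_k\eta$, and the Schwartz function $\partial_l\partial_k\eta$ satisfies $\int x^\beta \partial_l\partial_k\eta \, dx = 0$ for all $|\beta| \leq 1$ (by two integrations by parts). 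Thus I can apply Theorem~\ref{Ward} to $\partial_l\partial_k\eta$ with the stronger choice $N = 2$, obtaining decay $(1+|x|)^{-d-2+\epsilon}$, which is exactly what is needed. The second application of Theorem~\ref{Ward} to $g$ then delivers $|R_i g(x)| = |R_i R_j \partial_l \eta(x)| \leq C(1+|x|)^{-d-1+\epsilon}$, and combining componentwise over $l$ gives the stated bound on $R_iR_j\nabla\eta$.

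For the second estimate, the assumption that $\hat\psi$ is supported away from the origin implies that every moment $\int x^\beta \psi \, dx$ vanishes, since such moments equal (up to constants) derivatives of $\hat\psi$ at the origin. Hence Theorem~\ref{Ward} applies to $\psi$ with arbitrarily large $N$, giving $|R_j\psi(x)| \leq C_N(1+|x|)^{-N}$ for every $N$. Because the multiplier $-i\xi_j/|\xi|$ is smooth on $\supp \hat\psi$, the function $R_j\psi$ is again Schwartz with Fourier transform supported away from the origin, so the same argument applies to $R_i(R_j\psi)$, yielding decay of any polynomial order, which is the claimed bound for any $\gamma > 0$.

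The principal obstacle is the middle step: one cannot naively iterate Theorem~\ref{Ward} because the first Riesz transform only barely preserves the moment structure. The bootstrap that makes the argument work is that $\partial_k g$ contains an additional derivative of $\eta$, yielding one extra vanishing moment and promoting Theorem~\ref{Ward} from $N = 1$ to $N = 2$ at the derivative level, which is precisely what hypothesis (2) of the second ($N=1$) application to $g$ requires.
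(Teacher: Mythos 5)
Your proof is correct and follows the route the paper intends: the paper states the lemma as a corollary of Theorem~\ref{Ward} and defers the details to \cite{Cozzi}, and your iterated application of that theorem --- in particular, using the extra vanishing moment of $\partial_k\partial_l\eta$ to run Theorem~\ref{Ward} with $N=2$ at the derivative level and thereby verify hypothesis (2) for $g=R_j\partial_l\eta$ --- is exactly the expected bootstrap. The only detail left implicit is matching the $\epsilon$'s in hypotheses (1) and (2) of the second application (e.g.\ take both equal to $\epsilon/2$), which is routine.
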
    
We will also make frequent use of the following well-known lemma.  A proof of the lemma can be found in \cite{Taniuchi}.
\begin{lem}\label{BShighfreq}
Assume $u$ is a divergence-free vector field in $L^2_{loc}(\R^d)$ with curl $u=\omega$.  There exists $C>0$ such that for every $j\geq 0$, 
\begin{equation}
\| \Delta_j \nabla u \|_{L^{\infty}} \leq C\| \Delta_j \omega\|_{L^{\infty}}.
\end{equation}
\end{lem}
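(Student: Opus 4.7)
The plan is to exploit that for $j \geq 0$ the operator $\Delta_j$ is a Fourier cutoff onto the annulus $|\xi| \sim 2^j$, which stays away from the origin. Combined with the divergence-free condition, this will let me invert the curl at frequency scale $2^j$ against a kernel whose $L^1$-norm is scale-invariant.

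First, the divergence-free hypothesis together with $\omega_{kl} = \partial_k u^l - \partial_l u^k$ yields, in the sense of distributions, the elliptic identity $\Delta u^l = \partial_k \omega_{kl}$. In Fourier variables this formally reads $\widehat{\partial_m u^l}(\xi) = (\xi_m \xi_k/|\xi|^2)\,\hat{\omega}_{kl}(\xi)$ away from $\xi = 0$. Next, I would introduce an auxiliary bump $\tilde\phi \in \mathcal{S}(\R^d)$ whose Fourier transform is smooth, supported in an annulus slightly wider than $\supp\hat\phi$, and equal to $1$ on $\supp\hat\phi$; setting $\tilde\phi_j(x) = 2^{jd}\tilde\phi(2^jx)$ and $\tilde\Delta_j f = \tilde\phi_j * f$, one has $\Delta_j = \tilde\Delta_j \Delta_j$ for all $j \geq 0$. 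Combining these two observations produces the convolution representation
\begin{equation*}
\Delta_j \partial_m u^l = K^{mk}_j * \Delta_j \omega_{kl},
\end{equation*}
where $K^{mk}_j$ is the inverse Fourier transform of the smooth, compactly supported symbol $\widehat{\tilde\phi}_j(\xi)\,\xi_m\xi_k / |\xi|^2$.

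Finally, a rescaling argument gives $K^{mk}_j(x) = 2^{jd} K^{mk}_0(2^jx)$ with $K^{mk}_0 \in \mathcal{S}(\R^d)$, so $\|K^{mk}_j\|_{L^1} = \|K^{mk}_0\|_{L^1}$ is a finite constant independent of $j$. Young's convolution inequality then yields $\|\Delta_j \partial_m u^l\|_{L^\infty} \le C\,\|\Delta_j \omega_{kl}\|_{L^\infty}$, which gives the claim. The main delicate point is justifying the Fourier manipulations when $u$ is only in $L^2_{loc}$, since $u$ need not lie in $\mathcal{S}'(\R^d)$ globally; however, in all applications within this paper $u \in L^\infty \subset \mathcal{S}'$, and more generally the distributional identity $\Delta u^l = \partial_k \omega_{kl}$ together with sub-polynomial growth of $u$ suffices to make the frequency-localized manipulations rigorous.
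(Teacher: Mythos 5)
Your proof is correct and is essentially the standard argument; the paper does not prove this lemma itself but cites \cite{Taniuchi}, where the proof proceeds exactly as you describe: the identity $\Delta u^l = \partial_k \omega_{kl}$ from the divergence-free condition, annular frequency localization to invert the Laplacian via a rescaled Schwartz kernel with $j$-independent $L^1$-norm, and Young's inequality. Your closing remark about $L^2_{loc}$ versus $\mathcal{S}'$ correctly identifies the only loose point in the lemma's hypotheses, and your resolution (in all uses here $u \in L^\infty \subset \mathcal{S}'$) is the right one.
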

\section{Proof of Theorem \ref{main}}
The proof of Theorem \ref{main} is a modification of the proof of uniqueness of solutions to the Euler equations, as given in \cite{Vishik, TTY}.  The proof below deviates from those in \cite{Vishik, TTY} in that we must account for the weight $g_z$ when proving the necessary estimates.  To do this, we make use of Lemma \ref{weight} and Lemma \ref{decayestcor}.

Let $u_1$, $u_2$, $g_z$, and $T>0$ be as in the statement of Theorem \ref{main}.  Set $w = u_1 - u_2$.  For any $N \ge 0$,
\begin{equation}\label{linftytobesov}
\begin{split}
	&\| g_zw \|_{L^{\infty}}
		\leq \sum_{j=-1}^N \| g_z\Delta_j w \|_{L^{\infty}} + \sum_{j>N}
			\| g_z\Delta_j w \|_{L^{\infty}} \\
	&\qquad
	\leq CN \sup_{-1\leq j\leq N} \| g_z\Delta_j w \|_{L^{\infty}}
			+ \sum_{j>N} \| \Delta_j w \|_{L^{\infty}} \\
	&\qquad
	\leq CN \sup_{j} \| g_z\Delta_j w \|_{L^{\infty}}
			+ C \sum_{j>N} 2^{-j}(\| \Delta_j \omega_1\|_{L^{\infty}}
			+ \|\Delta_j \omega_2\|_{L^{\infty}}) \\
	&\qquad
	\leq CN \sup_{j} \| g_z\Delta_j w \|_{L^{\infty}}
		+ C \sum_{j>N} 2^{-j}(\| \Delta_j \omega_1\|_{L^{\infty}}
		+ \|\Delta_j \omega_2\|_{L^{\infty}}) \\
	&\qquad
		\leq CN \sup_{j} \| g_z\Delta_j w \|_{L^{\infty}} + CM2^{-N},
\end{split}
\end{equation}
where we used Bernstein's Lemma and Lemma \ref{BShighfreq} to get the third inequality.
To estimate $\| g_zw \|_{L^{\infty}}$, we establish an estimate for $\sup_{j} \| g_z\Delta_j w \|_{L^{\infty}}$ and only then judiciously choose a value of $N$ (see (\ref{N})).

We now estimate $\sup_{j} \| g_z\Delta_j w \|_{L^{\infty}}$, a task that will consume most of our effort.

Set $p=p_1 - p_2$.  Note that $w$ satisfies
\begin{equation*}
\partial_t w + w\cdot\nabla u_2 + u_1\cdot\nabla w = -\nabla p.
\end{equation*}
Applying the $\Delta_j$ operator to this equation gives
\begin{equation*}
\partial_t \Delta_j w + S_{j-2}u_1\cdot \nabla \Delta_jw  =  -Q_j(u_1,w) -\Delta_j(w\cdot\nabla u_2)-\nabla \Delta_j p,
\end{equation*}
where 
\begin{equation}
Q_j(u_1,w)=\Delta_j(u_1\cdot\nabla w) - S_{j-2}u_1\cdot \nabla \Delta_jw.
\end{equation}
We now multiply through by $g_z$ to get
\begin{equation*}
\partial_t (g_z\Delta_j w) + g_zS_{j-2}u_1\cdot \nabla \Delta_jw  =  -g_zQ_j(u_1,w) -g_z\Delta_j(w\cdot\nabla u_2)-g_z\nabla \Delta_j p.
\end{equation*}
Observe that, by the product rule,
\begin{equation*}
\begin{split}
&g_zS_{j-2}u_1\cdot \nabla \Delta_jw = S_{j-2}u_1\cdot g_z\nabla \Delta_jw\\
&\qquad = S_{j-2}u_1\cdot \nabla (g_z\Delta_jw) - (S_{j-2}u_1\cdot\nabla g_z)\Delta_j w.
\end{split}
\end{equation*}
This implies that
\begin{equation}\label{IthroughIV}
\partial_t (g_z\Delta_j w) + S_{j-2}u_1\cdot \nabla (g_z\Delta_jw) = I + II + III + IV,
\end{equation} 
where 
\begin{equation*}
\begin{split}
&I = (S_{j-2}u_1\cdot\nabla g_z)\Delta_j w\\
&II = -g_zQ_j(u_1,w)\\
&III = -g_z\Delta_j(w\cdot\nabla u_2)\\
&IV = -g_z\nabla \Delta_j p.
\end{split}
\end{equation*}

Fix $p_0 \ge 3$ to be specified later. We will estimate $I$-$IV$ for three cases: 1) middle frequencies, $3 \le j \le p_0$; 2) high frequencies, $j > p_0$; 3) low frequencies, $-1 \le j \le 2$, in that order.

So assume that $3 \leq j \leq p_0$.
We begin with $I$.  A simple calculation gives (\ref{gradgz}), which allows us to write
\begin{align}\label{I}
	\begin{split}
	\| I \|_{L^{\infty}}
		&= \|(S_{j-2}u_1\cdot \psi_z g_z)\Delta_j w\|_{L^{\infty}}
		\leq \alpha \| S_{j-2}u_1 \|_{L^{\infty}} \| g_z\Delta_j w \|_{L^{\infty}} \\
		&\leq CM\| g_z\Delta_j w \|_{L^{\infty}}.
	\end{split}
\end{align}
Here, we used $\|S_{j - 2} u_1\|_{L^\infty} \le C \|u_1\|_{L^\infty}$.

Next, we estimate $III$.  Expanding $III$ using Bony's paraproduct decomposition gives 
\begin{equation}\label{Bony}
\begin{split}
	&III
		= -g_z\Delta_j \sum_{|l-j|\leq 3} \Delta_l w \cdot\nabla S_{l-2} u_2 
			-g_z\Delta_j \sum_{|l-j|\leq 3} S_{l-2} w \cdot\nabla \Delta_l u_2 \\
		&\qquad
		-g_z\Delta_j \sum_{\stackrel{|l-l^{'}| \leq 1,}{\max\{l, l^{'}\} \geq j-3}}
			\Delta_l w \cdot\nabla \Delta_{l^{'}} u_2.
\end{split}
\end{equation}
The first term in (\ref{Bony}) can be estimated using Lemma \ref{weight} as follows:
\begin{equation}\label{first}
	\begin{split}
		&\largeabs{g_z\Delta_j \sum_{|l-j|\leq 3} \Delta_l w \cdot\nabla S_{l-2} u_2}
			\le C\sum_{|l-j|\leq 3} \|g_z\Delta_l w \cdot\nabla S_{l-2} u_2 \|_{L^{\infty}} \\
		&\qquad
		\le C\sum_{|l-j|\leq 3} \| S_{l-2} \nabla u_2\|_{L^{\infty}}\| g_z\Delta_l w\|_{L^{\infty}}
		\le CM(j-2)  \sup_q\| g_z\Delta_q w\|_{L^{\infty}}.
	\end{split}
\end{equation}
To obtain the last inequality above, we applied Bernstein's lemma and Lemma \ref{BShighfreq} to establish the series of estimates,
\begin{equation}\label{j-2}
\begin{split}
&\| S_{l-2} \nabla u_2\|_{L^{\infty}} \leq \| \Delta_{-1}\nabla u_2\|_{L^{\infty}} + \sum_{j=0}^{l-2} \| \Delta_j \nabla u_2\|_{L^{\infty}} \leq C\| u_2 \|_{L^{\infty}}  + C(l-2) \| \omega_2 \|_{L^{\infty}}.
\end{split}
\end{equation}
For the second term in (\ref{Bony}), another application of Lemma \ref{weight} and Lemma \ref{BShighfreq} gives
\begin{equation}\label{second}
	\begin{split}
		&\largeabs{g_z\Delta_j \sum_{|l-j|\leq 3} S_{l-2} w \cdot\nabla \Delta_l u_2}
			\le C\sum_{|l-j|\leq 3} \| g_z S_{l-2} w \cdot\nabla \Delta_l u_2\|_{L^{\infty}}\\
		&\qquad
			\le C\sum_{|l-j|\leq 3} \left( \| \Delta_l\nabla u_2 \|_{L^{\infty}}
				\sum_{m\leq l-2} \| g_z \Delta_m w\|_{L^{\infty}} \right)
 		\le CM (j-2) \sup_q \| g_z \Delta_q w\|_{L^{\infty}}. 
	\end{split}
\end{equation}
The third term in (\ref{Bony}) can be estimated using the divergence-free property of $w$, Lemma \ref{weight}, Bernstein's Lemma, and Lemma \ref{BShighfreq}.  We have the series of estimates 
\begin{equation}\label{third}
\begin{split}
	&\largeabs{g_z\Delta_j \sum_{\stackrel{|l-{l^{'}}|\leq 1,}{ \max\{l,l^{'}\}\geq j-3}}
			\Delta_l w \cdot\nabla \Delta_{l^{'}} u_2}
		= \largeabs{g_z\Delta_j \sum_{k=1}^d \sum_{\stackrel{|l-{l^{'}}|\leq 1,}
			{\max\{l,l^{'}\}\geq j-3}}
				\partial_k (\Delta_l w^k \Delta_{l^{'}} u_2)} \\
	&\le C\sum_{\stackrel{|l-{l^{'}}|\leq 1,} {\max\{l,l^{'}\}\geq j-3}}2^j 
			\| g_z \Delta_l w \Delta_{l^{'}} u_2 \|_{L^{\infty}}
	\le C\sum_{\stackrel{|l-{l^{'}}|\leq 1,} {\max\{l,{l^{'}}\}\geq j-3}}2^{j-{l^{'}}}\|
			\Delta_{l^{'}} \nabla u_2 \|_{L^{\infty}}\| g_z \Delta_l w\|_{L^{\infty}} \\
	&\le C\| \omega_2 \|_{L^{\infty}} \sup_q \| g_z \Delta_q w\|_{L^{\infty}}
	\le CM \sup_q \| g_z \Delta_q w\|_{L^{\infty}}. 
\end{split}
\end{equation}

Substituting (\ref{first}), (\ref{second}), and (\ref{third}) into (\ref{Bony}) yields the estimate
\begin{equation}
\| III \|_{L^{\infty}} \leq  CM (j-2) \sup_q \| g_z \Delta_q w\|_{L^{\infty}}.
\end{equation}
We now estimate $II$.  As in \cite{Vishik}, we write $Q_j(u_1,w) = \sum_{m=1}^4 Q^m_j(u_1,w)$, where
\begingroup
\allowdisplaybreaks
\begin{equation*}
\begin{split}
&Q^1_j(u_1,w) = \sum_{k=1}^d \Delta_j T_{\partial_kw}u^k_1\\
&Q^2_j(u_1,w) = - \sum_{k=1}^d [T_{u^k_1}\partial_k, \Delta_j]w\\
&Q^3_j(u_1,w) = \sum_{k=1}^d T_{u^k_1 - S_{j-2}u^k_1}\partial_k \Delta_j w\\
&Q^4_j(u_1,w) = \sum_{k=1}^d \{ \Delta_j R(u^k_1 \partial_k w) - R(S_{j-2}u^k_1, \Delta_j \partial_k w) \}. 
\end{split}
\end{equation*} 
\endgroup
To estimate $g_zQ^1_j(u_1,w)$, we use the divergence-free property of $u_1$, Lemma \ref{weight}, Bernstein's Lemma, and Lemma \ref{BShighfreq} to write
\begin{equation*}
\begin{split}
	&\abs{g_zQ^1_j(u_1,w)}
		= \largeabs{g_z\sum_{k=1}^d \Delta_j\partial_k \sum_{|l-j|\leq 3} S_{l-2} w \Delta_l u^k_1}
		\le C  \sum_{|l-j|\leq 3} 2^j\| g_z S_{l-2} w \Delta_l u_1 \|_{L^{\infty}} \\
		&\qquad
		\le  C\sum_{|l-j|\leq 3} 2^{j-l} (l-2)\sup_q \| g_z\Delta_q w \|_{L^{\infty}}\|
			\Delta_l \nabla u_1 \|_{L^{\infty}} \\
		&\qquad
		\le C\|\omega_1 \|_{L^{\infty}} (j-2)\sup_q \| g_z\Delta_q w \|_{L^{\infty}}
		\le CM (j-2)\sup_q \| g_z\Delta_q w \|_{L^{\infty}}. 
\end{split}
\end{equation*}

Next, we estimate $g_z Q^3_j(u_1,w)$.  By properties of the Littlewood-Paley operators, there exists $M_0>0$ such that
\begin{equation*}
\begin{split}
	&Q^3_j(u_1,w)
		= \sum_{k=1}^d\sum_{l=1}^{\infty} S_{l-2}(u^k_1 - S_{j-2}u^k_1)
			\Delta_l \partial_k\Delta_j w \\
		&\quad
		= \sum_{k=1}^d\sum_{|l-j|\leq 3} S_{l-2}
			\left(\sum_{m=j-1}^{\infty}\Delta_mu^k_1\right)
			\Delta_l \partial_k\Delta_j w
		= \sum_{k=1}^d \sum_{|l-j|\leq 3} \left( \sum_{|m-j| \leq M_0} \Delta_mu^k_1\right)
			\Delta_l\partial_k\Delta_j w. 
\end{split}
\end{equation*}
Hence,
\begin{equation}\label{R3}
\begin{split}
	&\abs{g_zQ^3_j(u_1,w)}
		\le C\sum_{k=1}^d \sum_{|l-j|\leq 3} \left( \sum_{|m-j| \leq M_0}2^{-m}\|\Delta_m
			\nabla u_1 \|_{L^{\infty}} \right)\|g_z\partial_k\Delta_l\Delta_j w\|_{L^{\infty}} \\
		&\qquad \le C\|\omega_1\|_{L^{\infty}}\sum_{k=1}^d \sum_{|l-j|\leq 3} 2^{-j}\|
			g_z\partial_k\Delta_l\Delta_j w\|_{L^{\infty}},
\end{split}
\end{equation}
where Bernstein's Lemma was used to get the first inequality and Lemma \ref{BShighfreq} was applied to get the second inequality.  Now note that by Lemma \ref{weight},  
\begin{equation*}
\begin{split}
&\|g_z\partial_k\Delta_l\Delta_j w\|_{L^{\infty}} \leq C2^{l}\| g_z\Delta_j w\|_{L^{\infty}}.\\ 
\end{split}
\end{equation*}
Inserting this estimate into (\ref{R3}) gives 
\begin{equation*}
	\abs{g_zQ^3_j(u_1,w)}
		\le CM \sup_q \| g_z\Delta_q w\|_{L^{\infty}}.
\end{equation*}

To estimate $g_zQ^2_j(u_1,w)$, we observe that there exists $M_0>0$ such that 
\begin{equation*}
	Q^2_j(u_1,w)
		=  -\sum_{k=1}^d \sum_{|l-j|\leq M_0} [S_{l-2}u^k_1 , \Delta_j] \partial_k\Delta_{l} w.
\end{equation*}
We express the commutator as a convolution integral and utilize the divergence-free property of $u_1$ to write,
\begin{equation*}
\begin{split}
	&Q^2_j(u_1,w)(x)
		= -
		\sum_{|l-j|\leq M_0} \sum_{k=1}^d
			\int_{\R^d} \phi_j(y) (S_{l-2}u^k_1(x) - S_{l-2}u^k_1(x-y))
				\partial_k\Delta_l w (x-y) \, dy \\
		&\qquad = 
		-
		\sum_{|l-j|\leq M_0} \sum_{k=1}^d
			\int_{\R^d} \partial_k\phi_j(y) (S_{l-2}u^k_1(x) - S_{l-2}u^k_1(x-y))
				\Delta_l w (x-y) \, dy.
\end{split}
\end{equation*}
Hence,
\begin{equation*}
\begin{split}
	&\abs{g_zQ^2_j(u_1,w)(x)} \\
		&\quad \le g_z(x) \hspace{-0.5em} \sum_{|l-j|\leq M_0} 2^{j(d+1)}
			\int_{\R^d} |\nabla\phi(2^jy)| \|S_{l-2}\nabla u_1 \|_{L^{\infty}}|y|
				g_z(x-y)h_z(x-y) |\Delta_l w (x-y)| \, dy \\
		&\quad \le \sum_{|l-j|\leq M_0} 2^{j(d+1)} \| g_z\Delta_l w \|_{L^{\infty}}
			\|S_{l-2}\nabla u_1 \|_{L^{\infty}}
			\int_{\R^d} |\nabla\phi(2^jy)| |y| g_z(x)h_z(x-y) \, dy \\
		&\quad \le \sum_{|l-j|\leq M_0} 2^j \| g_z\Delta_l w \|_{L^{\infty}}
			\|S_{l-2}\nabla u_1 \|_{L^{\infty}}
			\int_{\R^d} |\nabla\phi(y)||2^{-j}y|(1+|2^{-j}y|)^{\alpha} \, dy \\
		&\quad \le C M(j-2) \sup_q \| g_z \Delta_q w\|_{L^{\infty}},
\end{split}
\end{equation*}
where we applied (\ref{gh}) and a change of variables to get the second-to-last inequality and (\ref{j-2}) to get the last inequality above.

For $g_zQ^4_j(u_1,w)$, we again follow \cite{Vishik} and write $Q^4_j(u_1,w)$ as the sum of two terms:
\begin{equation*}
Q^4_j(u_1,w) = Q^{4,1}_j(u_1,w) + Q^{4,2}_j(u_1,w),
\end{equation*}
where 
\begin{equation*}
\begin{split}
&Q^{4,1}_j(u_1,w) = \sum_{k=1}^d \Delta_j\partial_kR(u^k_1 - S_{j-2}u^k_1, w), \text{ and }\\
&Q^{4,2}_j(u_1,w) = \sum_{k=1}^d \{ \Delta_j R(S_{j-2}u^k_1, \partial_k w) - R(S_{j-2}u^k_1, \Delta_j \partial_k w) \}.
\end{split}
\end{equation*}
By properties of Littlewood Paley operators and Lemma \ref{weight},
\begin{equation*}
\begin{split}
	&\abs{g_zQ^{4,1}_j(u_1,w)}
		= \largeabs{g_z \sum_{k=1}^d \Delta_j\partial_k \sum_{\stackrel{|l-l^{'}|\leq 1,}
			{\max\{l,l^{'}\}\geq j-3}} \Delta_{l}(u^k_1 - S_{j-2}u^k_1) \Delta_{l^{'}} w} \\
	&\qquad \le
		C\sum_{k=1}^d\sum_{\stackrel{|l-l^{'}|\leq 1,} {\max\{l,l^{'}\}\geq j-3}}2^j
			\|g_z \Delta_{l}(u^k_1 - S_{j-2}u^k_1)\Delta_{l^{'}}w\|_{L^{\infty}} \\
	&\le C \sup_{q} \|g_z\Delta_{q} w\|_{L^{\infty}}
		\sum_{l\geq j-M_0}2^{j-l}\|\Delta_{l} \nabla u_1\|_{L^{\infty}}
	\le CM \sup_{q} \|g_z\Delta_{q} w\|_{L^{\infty}},
\end{split}
\end{equation*}
where Lemma \ref{BShighfreq} was again used to get the last inequality.  

The strategy for estimating $Q^{4,2}_j(u_1,w)$ is similar to that used to bound $g_zQ^2_j(u_1,w)$.  By properties of Littlewood Paley operators and the divergence-free property of $u_1$,
\begin{equation*}
\begin{split}
	&Q^{4,2}_j(u_1,w)
		= g_z\sum_{k=1}^d \sum_{l=j-M_0}^{j-1} \sum_{l^{'}=l-1}^{l+1}
			[\Delta_j, \Delta_lS_{j-2}u^k_1]\Delta_{l^{'}}\partial_k w \\
	&= \sum_{k=1}^d \sum_{l=j-M_0}^{j-1} \sum_{l^{'}=l-1}^{l+1} 2^{jd}
		\int_{\R^d} \phi(2^jy)[\Delta_lS_{j-2}u^k_1(x) - \Delta_lS_{j-2}u^k_1(x-y)]
			\Delta_{l^{'}}\partial_k w(x-y) \, dy \\
	&= \sum_{k=1}^d \sum_{l=j-M_0}^{j-1}
		\sum_{l^{'}=l-1}^{l+1} 2^{j(d+1)}\int_{\R^d}
		(\partial_k\phi)(2^jy)[\Delta_lS_{j-2}u^k_1(x) - \Delta_lS_{j-2}
		u^k_1(x-y)] \Delta_{l^{'}} w(x-y) \, dy.
\end{split}
\end{equation*}
Hence,
\begin{equation*}
\begin{split}
	&\abs{g_zQ^{4,2}_j(u_1,w)} \\
	&\qquad \le \sum_{l=j-M_0}^{j-1} \sum_{l^{'}=l-1}^{l+1} 2^{j(d+1)}
		\int_{\R^d} |\nabla\phi(2^jy)| \| \Delta_l\nabla S_{j-2}u_1 \|_{L^{\infty}}
			|y| g_z(x)h_z(x-y)\| g_z\Delta_{l^{'}} w\|_{L^{\infty}} \, dy \\
	&\qquad \le \sum_{l=j-M_0}^{j-1} \sum_{l^{'}=l-1}^{l+1} 2^{j(d+1)} \|
		\Delta_l\nabla u_1 \|_{L^{\infty}} \| g_z\Delta_{l^{'}} w\|_{L^{\infty}}
		\int_{\R^d} |\nabla\phi(2^jy)|  |y| (1+ |y|)^{\alpha} \, dy \\
	&\qquad \le C \sum_{l=j-M_0}^{j-1} \sum_{l^{'}=l-1}^{l+1} \| \omega_1 \|_{L^{\infty}} \|
		g_z\Delta_{l^{'}} w\|_{L^{\infty}} \int_{\R^d} |\nabla\phi(y)|
		|y| (1+ |2^{-j}y|)^{\alpha} \, dy \\
	&\qquad \le CM \sup_q \| g_z\Delta_{q} w\|_{L^{\infty}},
\end{split}
\end{equation*}
where we again used (\ref{gh}) and Lemma \ref{BShighfreq}.

We conclude that
\begin{align*}
	\norm{II}_{L^\iny}
		\le C M(j-2) \sup_q \| g_z \Delta_q w\|_{L^{\infty}}.
\end{align*}

We now estimate $IV$.  We follow arguments from \cite{TTY}.  Set $\tilde{\phi}_j = \phi_{j-1} + \phi_j + \phi_{j+1}$, so that $\tilde{\Delta}_j = \Delta_{j-1} + \Delta_j + \Delta_{j+1}$.  As in \cite{TTY}, write
\begin{equation}
\begin{split}
&IV = -g_z\nabla \Delta_j p = -g_z\nabla(-\Delta)^{-1}\sum_{k,l=1}^d \partial_k \tilde{\Delta}_j\partial_l\Delta_j (w^lu_1^k)\\
&\qquad   -g_z\nabla(-\Delta)^{-1}\sum_{k,l=-1}^d \partial_l \tilde{\Delta}_j\partial_k\Delta_j (w^k u_2^l)  =: V + VI.
\end{split}
\end{equation}
For $V$, observe that
\begin{equation*}
	\begin{split}
		&V
		= g_z(x)\sum_{k,l=1}^d \int_{\R^d} \nabla(-\Delta)^{-1} \partial_k\tilde{\phi}_j (y)
				\partial_l\Delta_j (w^lu_1^k)(x-y) \, dy,
\end{split}
\end{equation*}
so that
\begin{equation}\label{pressureV}
	\begin{split}
		&\abs{V}
		\le \sum_{k,l=1}^d \|g_z\partial_l\Delta_j (w^lu^k_1)
				\|_{L^{\infty}}\int_{\R^d} |\nabla(-\Delta)^{-1}
				\partial_k\tilde{\phi}_j (y)|g_z(x)h_z(x-y)  \, dy \\
		&\qquad
		\le \sum_{k,l=1}^d \|g_z\partial_l\Delta_j (w^lu^k_1)
				\|_{L^{\infty}}\int_{\R^d} |\nabla(-\Delta)^{-1}
				\partial_k\tilde{\phi}_j (y)|(1+ |y|)^{\alpha} \, dy, \\
\end{split}
\end{equation}
where we used (\ref{gh}) to get the last inequality.  The operator $\nabla(-\Delta)^{-1} \partial_k$ can be expressed as a product of Riesz operators, which commute with dilations. Thus, a change of variables gives 
\begin{equation}\label{Vstep2}
\begin{split}
&\abs{V} \leq    \sum_{k,l=1}^d \|g_z\partial_l\Delta_j (w^lu^k_1) \|_{L^{\infty}}\int_{\R^d} |\nabla(-\Delta)^{-1} \partial_k\tilde{\phi} (y)|(1+ |2^{-j}y|)^{\alpha}  \, dy \\
&\qquad \leq   \sum_{k,l=1}^d \|g_z\partial_l\Delta_j (w^lu^k_1) \|_{L^{\infty}} \int_{\R^d} |\nabla(-\Delta)^{-1} \partial_k\tilde{\phi} (y)|(1+ |y|)^{\alpha}  \, dy.
\end{split}
\end{equation}
Observe that, since $\tilde{\phi}$ has Fourier transform supported away from the origin, Lemma \ref{decayestcor} can be applied.  Thus the integral in (\ref{Vstep2}) is finite, and    
\begin{equation}
\abs{V} \leq C\sum_{k,l=1}^d\|g_z\partial_l\Delta_j (w^lu_1^k) \|_{L^{\infty}}.
\end{equation}

We estimate $VI$ in exactly the same way as $V$, and we conclude that
\begin{equation}\label{pressureVI}
\abs{VI} \leq C\sum_{k,l=1}^d\|g_z\partial_k\Delta_j (w^ku_2^l) \|_{L^{\infty}}.
\end{equation}

We can now complete the estimate for $V$ and $VI$ by applying an argument identical to that used to estimate $III$.  This gives
\begin{equation*}
	\norm{IV}_{L^\iny}
		\le \norm{V}_{L^\iny} + \norm{VI}_{L^\iny}
		\le CM(j-2)\sup_q \|g_z\Delta_q w \|_{L^{\infty}}.
\end{equation*} 

Viewing (\ref{IthroughIV}) as a transport equation ($g_z\Delta_j w$ transported by $S_{j - 2} u_1$), integrating in time, and applying the estimates for $I$, $II$, $III$, and $IV$ above, we conclude that for $3 \leq j\leq p_0$,
\begin{equation*}
 \| g_z\Delta_j w(t) \|_{L^{\infty}} \leq   \| g_z\Delta_j w^0 \|_{L^{\infty}} + CM\int_0^t \left( (j-2) \sup_q  \| g_z\Delta_q w(s) \|_{L^{\infty}}   \right)  \, ds,
\end{equation*} 
so that
\begin{equation}\label{midfreq}
 \sup_{3 \leq j \leq p_0}\| g_z\Delta_j w(t) \|_{L^{\infty}} \leq   \sup_{3 \leq j \leq p_0}\| g_z\Delta_j w^0 \|_{L^{\infty}} + CM\int_0^t \left(  p_0\sup_q  \| g_z\Delta_q w(s) \|_{L^{\infty}}   \right)  \, ds.
\end{equation} 

For the case $j>p_0$, note that by Bernstein's lemma and Lemma \ref{BShighfreq},
\begin{equation}\label{highfreq}
\sup_{j\geq p_0} \| g_z\Delta_j w(t) \|_{L^{\infty}} \leq \sup_{j\geq p_0} 2^{-j}\| \Delta_j \nabla w(t) \|_{L^{\infty}} \leq CM 2^{-p_0}.
\end{equation}

Now assume $-1\leq j \leq 2$.  In this case, $\| I \|_{L^{\infty}}$ can be handled exactly as in (\ref{I}).  For $III$, the divergence-free assumption on $u_2$ and Lemma \ref{weight} imply that 
\begin{equation*}
\begin{split}
&\abs{III} = \abs{g_z\Delta_j\nabla\cdot(wu_2)} \leq C2^j \| g_zwu_2 \|_{L^{\infty}} \leq CM \| g_zw \|_{L^{\infty}}.
\end{split}
\end{equation*}  
For $IV$, we write
\begin{equation}\label{pressurelowfreq3}
g_z\nabla\Delta_j p = g_z\sum_{i,k=1}^d R_i R_k \nabla\Delta_j (w^iu_1^k + u_2^iw^k).
\end{equation}

We focus on $g_z R_i R_k \nabla\Delta_j (w^iu_1^k)$.  Note that for $0\leq j \leq 2$,
\begin{equation*}
\begin{split}
	&g_z R_i R_k \nabla\Delta_j (w^iu_1^k)
		= \int_{\R^d} (R_iR_k\nabla\phi_j(y)) g_z(x) h_z(x-y) u_1^k (x-y) (g_zw^i)(x-y) \, dy,
\end{split}
\end{equation*}
so
\begin{equation}\label{pressurelowfreq}
\begin{split}
	&\abs{g_z R_i R_k \nabla\Delta_j (w^iu_1^k)}
		\le C\| u_1 \|_{L^{\infty}}\| g_zw \|_{L^{\infty}}
			\int_{\R^d} |R_iR_k\nabla\phi_j(y)| (1+ |y|)^{\alpha} \, dy \\
		&\qquad \le CM\| g_zw \|_{L^{\infty}} \int_{\R^d} |R_iR_k\nabla\phi(y)|
			(1+ |2^{-j}y|)^{\alpha} \, dy,
\end{split}
\end{equation}
where we used (\ref{gh}) to get the first inequality.  Since $\nabla\phi$ has Fourier support away from the origin, Lemma \ref{decayestcor} can be applied to conclude that the integral in (\ref{pressurelowfreq}) is finite.  Thus,
\begin{equation*}
\abs{g_z R_i R_k \nabla\Delta_j (w^iu_1^k)} \leq CM\| g_zw \|_{L^{\infty}}.
\end{equation*}

For $j=-1$, we must estimate
\begin{equation}\label{pressurelowfreq1}
g_z R_i R_k \nabla\Delta_{-1} (w^iu_1^k) = g_z(x) \int_{\R^d} R_iR_k\nabla\chi(y) (w^iu_1^k) (x-y) \, dy.
\end{equation} 
This term is the only term which places a substantial restriction on the growth of $h_z$ (or decay of $g_z$) at infinity.  Indeed, estimates for all other terms in the proof of Theorem \ref{main} will hold with $h_z(x) = (1+|x-z|)^{\alpha}$ for any $\alpha>0$.  It is in the estimate for (\ref{pressurelowfreq1}) where we utilize the membership of $u_1$ and $u_2$ in $L^p(\R^d)$ to optimize the value of $\alpha$.

Let $q$ satisfy $1\slash p + 1 \slash q =1$. Then   
\begin{equation*}
\begin{split}
	&g_z R_i R_k \nabla\Delta_{-1} (w^iu_1^k)
	= g_z(x) \int_{\R^d} R_iR_k\nabla\chi(y) h_z(x-y) u_1^k (x-y) (g_zw^i)(x-y) \, dy, \\
\end{split}
\end{equation*}
and hence,
\begin{equation}\label{pressurelowfreq2}
\begin{split}
	&\abs{g_z R_i R_k \nabla\Delta_{-1} (w^iu_1^k)}
		\le \|R_iR_k\nabla\chi(\cdot) g_z(x)h_z(x-\cdot)\|_{L^q} \|u_1^k  (g_zw^i) \|_{L^p} \\
		&\le \|R_iR_k\nabla\chi(\cdot) (1+|\cdot|)^{\alpha}\|_{L^q} \|u_1 \|_{L^p}
		\|g_zw \|_{L^{\infty}},
\end{split}
\end{equation}
where we used (\ref{gh}) to get the last inequality.

Given $p$, we must determine values of $\alpha$ for which $\|R_iR_k\nabla\chi(\cdot) (1+|\cdot|)^{\alpha}\|_{L^q}$ is finite.  In view of Lemma \ref{decayestcor}, a short calculation shows that $\|R_iR_k\nabla\chi(\cdot) (1+|\cdot|)^{\alpha}\|_{L^q}$ is finite if
\begin{equation*}
\alpha < 1+ d\slash p.
\end{equation*}
Therefore, assuming $\alpha < 1+ d\slash p$, it follows that
\begin{equation*}
	\abs{g_z R_i R_k \nabla\Delta_{-1} (w^iu_1^k)}
		\le CM \|g_zw \|_{L^{\infty}}.  
\end{equation*}

Of course, arguments identical to those above imply that
\begin{equation*}
\begin{split}
	&\abs{g_z R_i R_k \nabla\Delta_{-1}(u_2^iw^k)}
		\le CM \|g_zw \|_{L^{\infty}}, \text{ and } \\
	&\abs{g_z R_i R_k \nabla\Delta_{j}(u_2^iw^k)}
		\le CM \|g_zw\|_{L^{\infty}} 
\end{split}
\end{equation*}  
for $0\leq j \leq 2$.  Inserting these estimates into (\ref{pressurelowfreq3}) gives the following estimate for $-1\leq j \leq 2$:
\begin{equation*}
	\abs{g_z\nabla\Delta_{j}p}
		\le CM\|g_zw \|_{L^{\infty}}.
\end{equation*}

We now turn our attention to $II$.  Instead of considering the commutator, we estimate each term in the difference individually.  For the first term in the difference, we observe that, by Lemma \ref{weight},
\begin{equation*}
\begin{split}
	&\abs{g_zS_{j-2}u_1\cdot\nabla\Delta_j w} 
		\le \| S_{j-2}u_1g_z \nabla \Delta_j w \|_{L^{\infty}} \\
	&\qquad \le \| S_{j-2}u_1 \|_{L^{\infty}} \| g_z \nabla \Delta_j w \|_{L^{\infty}}
	\le CM 2^j \| g_z  \Delta_j w \|_{L^{\infty}}
\end{split}
\end{equation*}
for $-1\leq j \leq 2$.  For the second term in the difference, note that by the divergence-free property of $u_1$ and Lemma \ref{weight},
\begin{equation*}
\begin{split}
	&\abs{g_z\Delta_j (u_1\cdot\nabla w)}
		\le C2^j \| u_1g_zw \|_{L^{\infty}}\leq CM \|g_zw \|_{L^{\infty}}. 
\end{split}
\end{equation*}

We also recall the following estimate, which is derived in (\ref{linftytobesov}) and which holds for any $p_0\geq 0$:
\begin{equation}\label{inftytobesov2}
\| g_zw \|_{L^{\infty}} \leq Cp_0 \sup_{q} \| g_z\Delta_q w \|_{L^{\infty}} + CM2^{-p_0}.
\end{equation}

Integrating (\ref{IthroughIV}) in time, applying the above estimates for $I$, $II$, $III$, and $IV$ when $-1\leq j\leq 2$, applying (\ref{inftytobesov2}), and taking the supremum over $-1\leq j\leq 2$ gives   
\begin{equation*}
\sup_{-1\leq j \leq 2} \| g_z\Delta_j w(t) \|_{L^{\infty}} \leq \sup_{-1\leq j \leq 2} \| g_z\Delta_j w^0 \|_{L^{\infty}} + CM \int_0^t ( p_0 \sup_{q} \| g_z\Delta_q w(s) \|_{L^{\infty}} + 2^{-p_0} ) \, ds.
\end{equation*}
Combining this estimate with (\ref{midfreq}) and (\ref{highfreq}) yields
\begin{align}\label{preOsgood}
	\begin{split}
	\sup_q &\| g_z\Delta_q w(t) \|_{L^{\infty}} \\
		&\le \sup_q \| g_z\Delta_q w^0 \|_{L^{\infty}}
			+ CM \int_0^t \left(  p_0\sup_q  \| g_z\Delta_q w(s) \|_{L^{\infty}}
				+ 2^{-p_0} \right) \, ds \\
		&\le C \epsilon_0
			+ CM \int_0^t \left(  p_0\sup_q  \| g_z\Delta_q w(s) \|_{L^{\infty}}
				+ 2^{-p_0} \right) \, ds,
	\end{split}
\end{align}
where
\begin{align*}
	\epsilon_0 := \norm{g_z w^0}_{L^\iny},
\end{align*}
and where in the last inequality we applied Lemma \ref{weight}.

At this point, we could apply Gronwall's estimate to obtain
\begin{align*}
	\sup_q \| g_z\Delta_q w(t) \|_{L^{\infty}}
		\le \pr{C\epsilon_0 + CM 2^{-p_0} t} e^{CM p_0 t}
\end{align*}
and choose $p_0$ in a manner so as to minimize the right-hand side. This will not, however, yield a sufficiently tight estimate. Instead,
we will choose $p_0$ in (\ref{preOsgood}) to vary with $t$ (this is valid, because the constant, $C$, in (\ref{preOsgood}) does not depend on $p_0$) in a way that allows us to apply Osgood's Lemma.

Toward this end, first observe that it follows from Lemma \ref{weight} that for all $t\geq 0$,
\begin{equation*}
	\delta(t)
 		:= \frac{1}{C_0Mt}\int_0^t \sup_k \| g_z\Delta_k w(s) \|_{L^{\infty}} \, ds
		\le 1,
 \end{equation*}
where $C_0$ is the constant in (\ref{weight1}).
Now choose 
\begin{equation*}
p_0=3-\log_2 \delta(t).
\end{equation*}
Note that $p_0 \ge 3$ as required.

Substituting our chosen value for $p_0$ into (\ref{preOsgood}) yields
\begin{align}\label{preOsgood1}
	\begin{split}
		\sup_q \| &g_z\Delta_q w(t) \|_{L^{\infty}}
			\le C_0 \epsilon_0
				+ CMt\delta(t) + CM^2t(3-\log_2 \delta(t))\delta(t) \\
			&\le C_0 \epsilon_0
				+ CM^2t \delta(t) (3 - \log_2 \delta(t))
			\le C_0 \epsilon_0
				+ CM^2t \delta(t) (3 - \log \delta(t)).
	\end{split}
\end{align}

Setting $t=s$ in (\ref{preOsgood1}) and integrating both sides from $0$ to $t$, then dividing both sides by $C_0 Mt$ gives
\begin{align*}
	\delta(t)
		&\le \frac{C_0 \epsilon_0}{C_0 M}
			+ CM \frac{1}{t} \int_0^t s \delta(s) (3 - \log \delta(s)) \, ds
		\le \frac{\epsilon_0}{M}
			+ C M \int_0^t \delta(s) (3 - \log \delta(s)) \, ds.
\end{align*}

Since $x \mapsto x (3 - \log x)$ is increasing on $(0, 1]$ and we know that $\delta(s) \in (0, 1]$, we can apply Osgood's Lemma to obtain
\begin{align*}
	-\log (3 - \log \delta(t)) + \log \pr{3 - \log \pr{\frac{\epsilon_0}{M}}}
		\le C M t.
\end{align*}
Taking the exponential of both sides twice,
we conclude that
\begin{align*} 
	\delta(t)
		\le \overline{\delta}(t)
		:= e^3 \pr{\frac{\epsilon_0}{M}}^{e^{-C Mt}}.
\end{align*}
\ScratchWork{
	Let $B_0 = \epsilon_0/M$. Then exponentiating once gives
	\begin{align*}
		\frac{3 - \log B_0}{3 - \log \delta(t)}
			\le e^{CMt}.
	\end{align*}
	Noting the numerator and denominator of the left hand side along with the right hand side
	are each positive, we have
	\begin{align*}
		3 - \log \delta(t)
			\ge (3 - \log B_0) e^{- CMt}
	\end{align*}
	so
	\begin{align*}
		\log \delta(t)
			\le 3 - (3 - \log B_0) e^{- CMt}
	\end{align*}
	so
	\begin{align*}
		\delta(t)
			\le e^3 e^{- (3 - \log B_0) e^{- CMt}}
			= e^3 e^{(\log B_0 - 3) e^{- CMt}}
			\le e^3 e^{\log B_0 e^{- CMt}}
			= e^3 B_0^{e^{C M t}}.
	\end{align*}
} 

Now increase the bound in (\ref{preOsgood1}) to
\begin{align}\label{preOsgood2}
	\sup_q \| &g_z\Delta_q w(t) \|_{L^{\infty}}
		\le C_0 \epsilon_0 + CM^2t \delta(t) (4 - \log \delta(t))
\end{align}
and note that $\overline{\delta}(t) \le e^3$.
Then since $x \mapsto x (4 - \log x)$ is increasing for $x \in (0, e^3]$, we can replace $\delta(t)$ in (\ref{preOsgood2}) with $\overline{\delta}(t)$. This gives,
\begin{align}\label{B0Bound}
	\begin{split}
		\sup_q \| g_z&\Delta_q w(t) \|_{L^{\infty}}
			\le C_0 \epsilon_0 + CM^2t e^3 \pr{\frac{\epsilon_0}{M}}^{e^{-CMt}}
				\pr{4 - \log \pr{e^3 \pr{\frac{\epsilon_0}{M}}^{e^{-CMt}}}} \\
			&= C_0 \epsilon_0 + C M^2t e^3 \pr{\frac{\epsilon_0}{M}}^{e^{-CMt}}
				\pr{1 - e^{-CMt} \log \pr{\frac{\epsilon_0}{M}}}
				:= a.
	\end{split}
\end{align}

By (\ref{linftytobesov}),
\begin{align*}
	\norm{g_z w}_{L^\iny}
		\le C (N a + M 2^{-N}).
\end{align*}
Define $F(N) := N a + M 2^{-N}$. This has a minimum when
\begin{align}\label{N}
	N = N_0 := \log_2 (M \log 2/a),	
\end{align}
where we treat $N_0$ as though it were an integer. At this (near) minimum, assuming that $0 < a < M \log 2$, which holds for all sufficiently small $\norm{g_z w^0}_{L^\iny}$, we have
\begin{align*}
	F(N_0)
		&= a \log_2 \frac{M \log 2}{a} + \frac{Ma}{M \log 2}
		= M \frac{a}{M \log 2} - M \log 2 \frac{a}{M \log 2} \log_2 \frac{a}{M \log 2} \\
		&= M(b - (\log 2) b \log_2 b)
		= M b(1 - \log b),
\end{align*}
where
$
		b := a/(M \log 2) < 1.
$
(Note that if $a > M \log 2$, we can do no better than setting $N = 0$, which gives the useless bound, $\norm{g_z w}_{L^\iny} \le C M$.)

Hence, when $a < M \log 2$, we have
\begin{align*}
	\norm{g_z w}_{L^\iny}
		\le C M b(1 - \log b),
\end{align*}
which is the bound stated in Theorem \ref{main}.

\ScratchWork{
For fixed $b \in (0, 1)$, define the strictly increasing function, $F_b \colon [0, \iny) \mapsto [0, \iny)$ by
\begin{align*}
	F_b(t) = t b^{e^{-CMt}} (1 - e^{-CMt} \log b).
\end{align*}
Then
\begin{align*}
	a \le M \log 2
		&\iff C_1 \epsilon_0 + CM^2 e^3 F_{\pr{\frac{\epsilon_0}{M}}}(t)
			\le M \log 2 \\
		&\iff F_{\pr{\frac{\epsilon_0}{M}}}(t)
			\le C \brac{\frac{\log 2}{M} - \frac{C_1 \epsilon_0}{M^2}} \\
		&\iff \epsilon_0 \le \frac{M \log2}{C_1}
				\text{ and }
			t \le F_{\pr{\frac{\epsilon_0}{M}}}^{-1}
				\pr{C \brac{\frac{\log 2}{M} - \frac{C_1 \epsilon_0}{M^2}}}.
\end{align*}
} 



\begin{acknowledgement}
	The problem this paper addresses was suggested to the authors by Helena Nussenzveig
	Lopes and Milton Lopes Filho in the context of Serfati solutions to the 2D Euler
	equations. The National Science Foundation provided support for EC through grant
	DMS-1160704 and support for JPK through grant DMS-1212141.
\end{acknowledgement}

\end{document}